\newtheorem{thm}{Theorem}[section]
\newtheorem{cor}[thm]{Corollary}
\newtheorem{prop}[thm]{Proposition}
\newtheorem{lem}[thm]{Lemma}
\theoremstyle{definition}
\newtheorem{defn}[thm]{Definition}
\theoremstyle{remark}
\newtheorem{rem}[thm]{Remark}
\theoremstyle{definition}
\theoremstyle{definition}
\theoremstyle{definition}
\numberwithin{equation}{section}
\title{Simplicial arrangements with few double points}
\author{Dmitri Panov}
\address[Dmitri Panov]{Department of Mathematics, King’s College, London, United Kingdom}
\email{dmitri.panov@kcl.ac.uk}
\author[G.~Tahar]{Guillaume Tahar}
\address[Guillaume Tahar]{Beijing Institute of Mathematical Sciences and Applications, Huairou District, Beijing, China}
\email{guillaume.tahar@bimsa.cn}
\date{\today}
\keywords{Simplicial arrangements, Cubic curves, Projective rigidity}
\begin{document}

\begin{abstract}
In their solution to the orchard-planting problem, Green and Tao established a structure theorem which proves that in a line arrangement in the real projective plane with few double points, most lines are tangent to the dual curve of a cubic curve. We provide geometric arguments to prove that in the case of a simplicial arrangement, the aforementioned cubic curve cannot be irreducible. It follows that Gr\"{u}nbaum's conjectural asymptotic classification of simplicial arrangements holds under the additional hypothesis of a linear bound on the number of double points.
\end{abstract}
\maketitle
\setcounter{tocdepth}{1}
\begin{center}
\textit{\`{A} la mémoire de Gérard Mahec}
\end{center}

\tableofcontents

\section{Introduction}

A simplicial line arrangement is a finite set $\mathcal{A}$ of lines in the real projective plane $\mathbb{RP}^{2}$ such that every connected component of $\mathbb{RP}^{2} \setminus \bigcup\limits_{L \in \mathcal{A}} L$ is a triangle.
\par
There are two flavours of classification of line arrangements:
\begin{itemize}
    \item Two line arrangements $\mathcal{A}$ and $\mathcal{A}'$ are \textit{combinatorially equivalent} if they induce the same incidence structure on $\mathbb{RP}^{2}$ in terms of vertices, edges and faces.
    \item Two line arrangements $\mathcal{A}$ and $\mathcal{A}'$ are \textit{projectively equivalent} if they are conjugated by a projective transformation of $\mathbb{RP}^{2}$.
\end{itemize}

A combinatorial classification of simplicial line arrangements has been conjectured by Gr\"{u}nbaum in \cite{Gr} but still waits to be proven.
\par
Gr\"{u}nbaum's catalogue of combinatorial equivalence classes contains three infinite families $\mathcal{R}(0)$, $\mathcal{R}(1)$ and $\mathcal{R}(2)$, see below. It also contains about one hundred sporadic arrangements (the most complicated of them is formed by $37$ lines). A comprehensible description of the sporadic arrangements is given in \cite{Gr1}. Thanks to the work of Cuntz, we now know that this list is not entirely complete. A computer verification of simplicial arrangements with at most 27 lines revealed four previously unknown examples (see \cite{Cu1}). More recently, a new sporadic arrangement consisting of 35 lines was discovered (see \cite{Cu2}).

\begin{figure}
\includegraphics[scale=0.8]{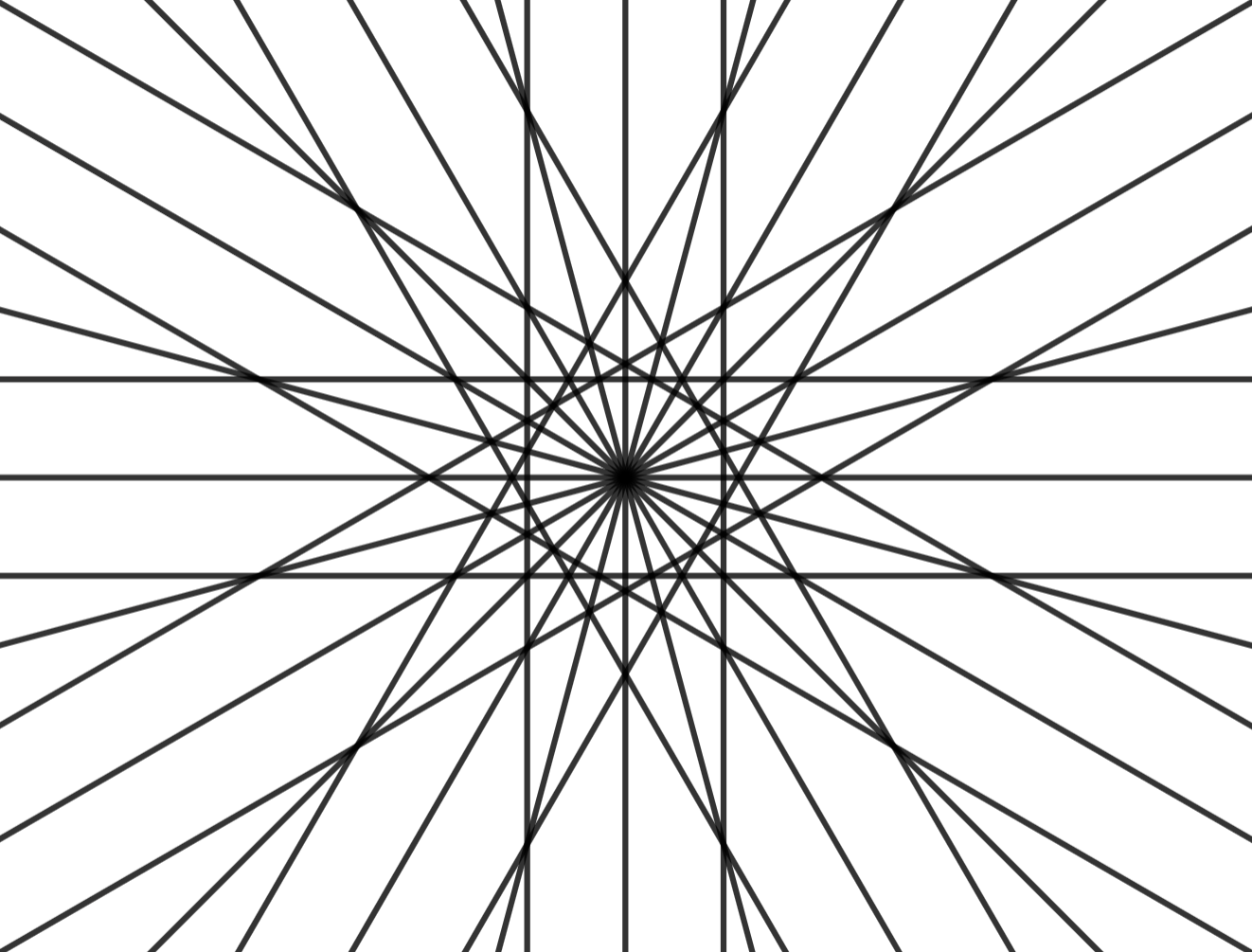}
\caption{Regular simplicial arrangement obtained from a regular $12$-gon.}
\label{fig:regular}
\end{figure}

\begin{defn}
The family $\mathcal{R}(0)$ consists of combinatorial equivalence classes $\mathcal{A}(k+1,0)$ for any $k \geq 2$. Arrangements in $\mathcal{A}(k+1,0)$ are formed by the union of a pencil of $k$ lines passing through a common base point $C$ and an additional line that does not pass through $C$.
\end{defn}

\begin{defn}
The family $\mathcal{R}(1)$ consists of combinatorial equivalence classes $\mathcal{A}(2k,1)$ for $k \geq 3$. Arrangements of $\mathcal{A}(2k,1)$ are combinatorially equivalent to the \textit{regular simplicial arrangement} formed by the $k$ sides and the $k$ symmetry axes of a regular $k$-gon (see Figure~\ref{fig:regular}).
\end{defn}

\begin{defn}
The family $\mathcal{R}(2)$ consists of combinatorial equivalence classes $\mathcal{A}(2k+1,1)$ for $k \in 2\mathbb{N}+4$. Arrangements of $\mathcal{A}(2k+1,1)$ are combinatorially equivalent to the arrangement obtained by adding the line at infinity to the regular simplicial arrangement associated to the regular $k$-gon.
\end{defn}

\begin{rem} Note that arrangements of families  $\mathcal{R}(1)$ and $\mathcal{R}(2)$ are projectively rigid. In other words, two combinatorially equivalent arrangement are identical up to a projective transformation. This has been proved in Theorem~3.6 and Corollary~3.7 of \cite{Cu}. For completeness we prove this fact in the Appendix in Corollary~\ref{cor:ProRig}, deducing it from an elementary and more general Proposition~\ref{prop:chareg}.
\end{rem}

The main result of this paper is a proof of the conjectural asymptotic classification of Gr\"{u}nbaum under the additional hypothesis that the simplicial arrangement contains few double points (vertices where exactly two lines of the arrangement intersect).

\begin{thm}\label{thm:MAIN}
For any constant $K > 0$, there are only finitely many sporadic simplicial arrangements (combinatorial classes not belonging to $\mathcal{R}(0)$, $\mathcal{R}(1)$ or $\mathcal{R}(2)$) that have at most $Kn$ double points, where $n$ is the number of lines of the arrangement.
\end{thm}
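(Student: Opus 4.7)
\medskip
\noindent\textbf{Proof proposal.}

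The plan is to bootstrap from the Green--Tao structure theorem for line arrangements with few ordinary points. Fix $K>0$ and a sporadic simplicial arrangement $\mathcal{A}$ of $n$ lines with at most $Kn$ double points. Assuming $n$ is large enough (which is exactly what we are allowed to assume since we only need finiteness of exceptions), Green--Tao gives a plane cubic curve $\Gamma$ and a subset $\mathcal{A}'\subseteq\mathcal{A}$ with $|\mathcal{A}\setminus\mathcal{A}'|\leq C(K)$ such that every line of $\mathcal{A}'$ is tangent to $\Gamma$; equivalently the dual points $L^{\vee}$ for $L\in\mathcal{A}'$ lie on the dual cubic $\Gamma^{\vee}$. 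The theorem to be proved then reduces to showing that, for $n$ large, $\Gamma$ must be reducible in a very restricted way, and that this forces $\mathcal{A}$ into one of the families $\mathcal{R}(0)$, $\mathcal{R}(1)$, $\mathcal{R}(2)$.

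First I would carry out the case analysis on $\Gamma$. A cubic in $\mathbb{RP}^{2}$ is either (i) irreducible (smooth, nodal, or cuspidal), (ii) the union of a smooth conic and a line, or (iii) the union of three lines. In cases (ii) and (iii) a direct geometric analysis should match the resulting simplicial arrangement with one of the three Gr\"{u}nbaum families: three concurrent lines in (iii) give a pencil, hence (after adding back the $O_K(1)$ exceptional lines and demanding that every face is a triangle) one obtains an arrangement of type $\mathcal{R}(0)$; a conic plus a line in (ii) parametrises tangents that, combined with the simpliciality condition and projective rigidity (Corollary~\ref{cor:ProRig} via Proposition~\ref{prop:chareg}), must asymptotically reproduce the regular arrangements $\mathcal{R}(1)$ and, after adjoining the line at infinity, $\mathcal{R}(2)$. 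In each sub-case one reabsorbs the bounded set $\mathcal{A}\setminus\mathcal{A}'$ using that a simplicial arrangement is extremely constrained by its already-present triangular cells, so only finitely many combinatorial types arise for each $n$ before the arrangement is forced into the regular model.

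The main obstacle, and presumably the heart of the paper, is ruling out the irreducible case (i). The approach I would take is the following: if $\Gamma$ is irreducible, then the set $\mathcal{A}'$ of tangent lines is parametrised by a smooth one-dimensional piece of $\Gamma^{\vee}$, so the lines of $\mathcal{A}'$ sweep out a one-parameter family whose envelope is $\Gamma$. I would then use the defining property of a simplicial arrangement---every complementary region is a triangle---to derive a local constraint along each arc of $\Gamma$ between consecutive tangency points. Concretely, two consecutive tangent lines and a third line of $\mathcal{A}$ must bound a triangle, which should force an abundance of triple (or higher) incidences between the tangent lines, and one then shows that an irreducible cubic cannot support such a density of collinear tangency points without degenerating. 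Alternatively, one may exploit the group structure on a smooth cubic (or on the smooth part of a nodal/cuspidal one): collinearity of tangency points translates into an additive relation in this group, and the simplicial condition would generate so many relations that the set of tangency parameters becomes a finite torsion subgroup, contradicting $|\mathcal{A}'|\to\infty$.

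Once the irreducible case is excluded, the finite catalogue produced in the reducible cases (ii) and (iii) closes the argument: for each $K$, all but finitely many sporadic classes are ruled out, and the remaining arrangements with $\leq Kn$ double points belong to $\mathcal{R}(0)\cup\mathcal{R}(1)\cup\mathcal{R}(2)$, proving Theorem~\ref{thm:MAIN}. The main technical work, and the step I expect to be hardest, is the geometric exclusion of an irreducible cubic envelope under the simpliciality hypothesis; everything else is case analysis and application of the projective rigidity already recorded in the introduction.
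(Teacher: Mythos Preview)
Your overall architecture---apply Green--Tao, dispose of the irreducible cubic, then pin down the reducible cases as $\mathcal{R}(0),\mathcal{R}(1),\mathcal{R}(2)$---matches the paper. But two of your three steps contain real gaps.

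\textbf{Irreducible case.} Your second suggested mechanism is a non-starter: in Green--Tao's case~(3) the dual points already \emph{are} a coset $H\oplus g$ of a finite subgroup of the smooth locus of the cubic, with $|H|=n+O(K)$. So ``the simplicial condition forces the parameters into a finite torsion subgroup, contradicting $|\mathcal{A}'|\to\infty$'' goes nowhere---that's the starting hypothesis, not a contradiction. Your first suggestion (local triangle constraints force too many collinearities on the cubic) is closer in spirit but still speculative. What the paper actually proves is a quantitative inequality (Proposition~\ref{prop:irreducible}): if $k$ lines of a simplicial arrangement are tangent to the dual of an irreducible cubic, then $n\geq\frac{8}{7}k-3$. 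The mechanism is not incidence relations on the cubic but convex geometry: tangents along one arc of the dual curve are pairwise non-crossing inside a suitable convex polygon $\mathcal{P}$ cut out by the other tangents, and a combinatorial lemma (Lemma~\ref{lem:CPInter}) says that triangulating $\mathcal{P}$ then requires at least $|\mathcal{F}|-1$ further lines from outside the tangent family. This yields $g\gtrsim k/7$ extra lines, contradicting $g=O(K)$ for large $n$.

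\textbf{Regular case.} Saying you ``reabsorb the bounded set $\mathcal{A}\setminus\mathcal{A}'$ using that a simplicial arrangement is extremely constrained'' skips the hardest part of the paper. Proposition~\ref{prop:rigidity} shows that a simplicial arrangement within $\frac{1}{7}n$ lines of a regular one must equal it (possibly plus the line at infinity), and its proof occupies all of Section~\ref{sub:difference}: one first rules out extra lines through the centre (Lemma~\ref{lem:rigidity}), then any other extra line via a five-point alignment obstruction (Corollary~\ref{cor:configuration}, Lemma~\ref{lem:rigidity1}), and finally shows no line can be removed. Projective rigidity (Corollary~\ref{cor:ProRig}) is used only to pass from combinatorial to projective equivalence, not to control the $O(K)$ perturbation. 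Your pencil case is similarly underspecified; the paper handles it by the star-count of Corollary~\ref{cor:star1}, which forces $k_C\le n/2$ for any vertex outside family $\mathcal{R}(0)$.
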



The proof of Theorem~\ref{thm:MAIN}, given in Section~\ref{sub:PROOF}, relies on several results that relate the combinatorial structure of line arrangements to the geometry of cubic curves. The most important of them is the following theorem of Green-Tao\footnote{In \cite{GT}, the structure theorem is stated in the projectively dual terms of a configuration of points with few ordinary lines (lines containing exactly two points of the configuration). In particular, the regular simplicial arrangement appears as its projective dual 
$$X_{m} = \lbrace{ [\cos \frac{2\pi j}{m} : \sin \frac{2\pi j}{m} : 1], 1 \leq j \leq m \rbrace} \cup 
\lbrace{ [-\sin \frac{2\pi j}{m} : \cos \frac{2\pi j}{m} : 0], 1 \leq j \leq m \rbrace}
$$ for some $m$.}, proved as Theorem~1.5 in \cite{GT}.

\begin{thm}[Green-Tao full structure theorem]\label{thm:GT}
Suppose that $\mathcal{A}$ is an arrangement of $n$ lines in $\mathbb{RP}^{2}$. Let $K > 0$ be a real parameter. Suppose that at most $Kn$ vertices of $\mathcal{A}$ are double points and that $n \geq  exp~exp (cK^{c})$ for some constant $c$ independent from $K$. Then, up to a projective transformation, $\mathcal{A}$ differs by at most $O(K)$ lines (which can be added or deleted) from an example of one of the following three types:
\begin{enumerate}
    \item a pencil of $n$ lines passing through a same point;
    \item a regular simplicial arrangement formed by the sides and symmetry axes of a regular $m$-gon where $m=\frac{n}{2}+O(K)$;
    \item the projective dual of a coset $H \oplus g$, $3g \in H$ of a finite subgroup $H$ of the non-singular real points of an irreducible cubic curve with $H$ having cardinality $n+O(K)$.
\end{enumerate}
\end{thm}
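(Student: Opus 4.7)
The plan is to pass to the projective dual: an arrangement of $n$ lines with at most $Kn$ double points corresponds to a set $P \subset \mathbb{RP}^{2}$ of $n$ points with at most $Kn$ \emph{ordinary lines} (lines incident to exactly two points of $P$). In this dual language the conclusion to establish is that $P$, after adding or removing $O(K)$ points and applying a projective transformation, coincides with one of three models: a near-collinear set (dual to a pencil), the set $X_{m}$ dual to a regular $m$-gon, or a coset $H \oplus g$ with $3g \in H$ on an irreducible cubic.

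The heart of the argument would be a \emph{weak structure theorem}: all but $O(K)$ points of $P$ lie on a cubic curve $C$ (possibly reducible). The classical Melchior inequality already yields a linear lower bound on the number of ordinary lines for any non-collinear $P$, and this can be exploited quantitatively. Combining Szemer\'edi--Trotter incidence bounds with an iterative density-increment argument applied to pencils of cubics, one extracts a cubic passing through almost all of $P$. This step is where the doubly exponential hypothesis $n \geq \exp\exp(cK^{c})$ enters, arising from iterated regularity/inverse-theorem arguments used to control the exceptional set.

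Once such a $C$ is in hand, I would perform a case analysis on its reducibility type. If $C$ is smooth and irreducible, the group law on $C(\mathbb{R})$ translates the few-ordinary-lines hypothesis into a combinatorial statement about an abelian group: three collinear points correspond to triples with $p+q+r=c$ for a fixed constant $c$, so having few ordinary lines means that for most pairs $(p,q) \in P \times P$ the ``third intersection'' $c-p-q$ also lies in $P$. A Freiman-type inverse theorem on approximate subgroups then forces $P$, up to $O(K)$ exceptions, to coincide with a coset $H \oplus g$, with the condition $3g \in H$ reflecting the value of $c$. If $C$ is reducible as a conic and a line, I would parametrize the conic by a one-parameter group and reduce the problem to an arithmetic-progression structure, which recovers the regular simplicial arrangement after re-dualization; if $C$ degenerates further (three concurrent or three general lines), then $P$ is essentially collinear and falls into the pencil case.

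The main obstacle is unquestionably the weak structure theorem: producing a cubic through nearly all of the points of $P$ with explicit quantitative control on the exceptional set and on the constants. The passage from this weak structure to the full coset description, while technically delicate because of the various degenerate reducible types of cubics, is conceptually cleaner once the cubic and its group law (or its one-parameter parametrization, in the reducible cases) are available.
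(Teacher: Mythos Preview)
The paper does not prove this statement: Theorem~\ref{thm:GT} is quoted verbatim from Green and Tao and is introduced explicitly as ``proved as Theorem~1.5 in \cite{GT}''. There is no argument in the paper to compare your proposal against; the theorem is used as a black box in the proof of Theorem~\ref{thm:MAIN}.

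That said, your sketch is a faithful high-level summary of the strategy Green and Tao actually follow in \cite{GT}: dualise to a point set with few ordinary lines, establish a weak structure theorem placing almost all points on a single cubic, and then upgrade to the full statement by a case analysis on the reducibility type of the cubic together with additive-combinatorial input (approximate group structure on the smooth locus). A couple of small corrections: the weak structure step in \cite{GT} is not really a Szemer\'edi--Trotter plus density-increment argument on pencils of cubics, but rather a refinement of Melchior's dual-curvature argument combined with a triangular-grid analysis and Chasles-type results that force most points onto a cubic; and the doubly-exponential threshold comes not from regularity lemmas but from the quantitative bounds in the additive-combinatorics step (inverse sumset theorems) used to pin down the coset structure. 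These are matters of mechanism rather than of overall shape, and your outline correctly identifies the main obstacle as the weak structure theorem.
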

This theorem says that arrangements with a linear bound on the number of double points are \emph{governed by cubic curves}. Namely we see that in all three cases (1), (2), (3) when we take the arrangement of points dual to the lines of $\mathcal A$, most of such points lie on some cubic curve. The case (1) is a bit degenerate, and dual points lie on a line, in case (2) they lie on a disjoint union of a conic and a line, and in case (3) on an irreducible cubic curve. As for the arrangement $\mathcal A$ itself, most of its lines are \emph{tangent} to the curve  projectively dual to a cubic curve (again cases (1) and (2) are a bit degenerate here).


\par

Case (1) of Theorem~\ref{thm:GT} is realized by the arrangements of family $\mathcal{R}(0)$ while case (2) is realized by the regular simplicial arrangements of families $\mathcal{R}(1)$ and $\mathcal{R}(2)$. However in Section~\ref{sub:Reduction}, we prove that no curve that is projectively dual to an irreducible cubic can be tangent to most lines of a simplicial arrangement, eliminating case (3) of Theorem~\ref{thm:GT} for simplicial arrangements.

\begin{prop}\label{prop:irreducible}
Let $\mathcal{A}$ be a simplicial arrangement of $n$ lines in $\mathbb{RP}^{2}$. If the dual points of $k$ lines of $\mathcal{A}$ belong the same irreducible cubic curve then $n$ and $k$ satisfy $n \geq \frac{8}{7}k-3$.
\end{prop}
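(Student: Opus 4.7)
The plan is to work in the dual projective plane $(\mathbb{RP}^2)^*$, where the hypothesis becomes: $k$ of the $n$ dual points lie on an irreducible cubic $E$. Since $E$ has degree three, Bezout's theorem forbids any line from containing more than three of these $k$ points, so every vertex of $\mathcal{A}$ incorporates at most three lines from the distinguished $k$-subset.

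First I would set up the pair count within the $k$-subset. Writing $t_2^E$ and $t_3^E$ for the number of vertices of $\mathcal{A}$ at which exactly two or three distinguished lines meet, I obtain $\binom{k}{2} = t_2^E + 3 t_3^E$. The group law on $E$ characterizes collinearity: three points of $E$ are collinear iff they sum to zero. A direct count of such triples in a $k$-element subset of the smooth points of $E$ gives the sharp bound $t_3^E \leq \tfrac{(k-1)(k-2)}{6}$, achieved on a coset $g + H$ satisfying $3g \in H$; consequently $t_2^E \geq k - 1$.

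The main use of the simplicial hypothesis would be Melchior's identity $\sum_r (r-3)\, t_r = -3$, which holds with \emph{equality} for simplicial arrangements (equivalent to $3 f_2 = 2 f_1$). Partitioning each $t_r$ by the cubic multiplicity $r_E \in \{0,1,2,3\}$ at the vertex and rearranging, Melchior's equation becomes a linear relation among external-line incidences at cubic-rich vertices, the counts $t_2^E, t_3^E$, and auxiliary double-point counts. I would then double count external-line incidences using the identity $\sum_{v \in L} r_E(v) = k$ for every external line $L$ (since $L$ meets each of the $k$ distinguished lines exactly once), which bounds these incidences linearly in $n - k$. Combining this bound with Melchior's rearranged identity and the lower bound $t_2^E \geq k - 1$ should yield $n \geq \tfrac{8}{7} k - 3$.

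The main obstacle is extracting the sharp constant $\tfrac{8}{7}$. A crude execution of the plan, using only $t_2^E \geq k - 1$ and the per-line capacity bound $n_2^L + n_3^L \leq k/2$ (where $n_i^L$ counts vertices on $L$ with $r_E = i$), yields only $n \geq k + O(1)$, which is far too weak. Reaching $\tfrac{8}{7}$ will require a weighted accounting that treats external incidences at $r_E = 2$ and $r_E = 3$ vertices differently (the latter being much more restrictive, since a line through three cubic dual points is determined by any two of them), fully exploits the equality form of Melchior's identity rather than the bare inequality, and is sensitive to the combinatorics of how a single external line can simultaneously pass through several cubic-rich vertices.
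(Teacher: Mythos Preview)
Your proposal is not a proof: it is a plan whose decisive step is left open. You yourself note that the crude execution of the Melchior-plus-double-counting scheme gives only $n \geq k + O(1)$, and you then \emph{assert} that a weighted refinement ``should yield'' the constant $\tfrac{8}{7}$ without supplying any such weighting or any mechanism that would produce a nontrivial multiplicative gain. Nothing in the ingredients you list---the pair count $\binom{k}{2}=t_2^E+3t_3^E$, the coset bound $t_3^E\le \tfrac{(k-1)(k-2)}{6}$, Melchior's equality, and the per-line identity $\sum_{v\in L} r_E(v)=k$---distinguishes the cubic from an arbitrary curve of degree three that could be reducible; in particular your argument never uses irreducibility, yet for a conic-plus-line the regular arrangements show that no bound of the form $n\ge (1+\varepsilon)k - C$ is possible. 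So the missing ``weighted accounting'' is not a routine bookkeeping step: it would have to encode the irreducibility in a way your framework does not currently see.

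The paper's proof is entirely different and is geometric rather than combinatorial. It works with the dual curve $\Gamma$ of the cubic $\hat\Gamma$ inside $\mathbb{RP}^2$ itself, and splits into cases according to the real shape of $\hat\Gamma$ (smooth with one or two components, nodal, cuspidal, acnodal). In each case $\Gamma$ contains locally convex arcs; the lines of $\mathcal{A}$ tangent to a fixed convex arc are mutually non-crossing inside a certain convex polygon $\mathcal{P}$ cut out by the remaining tangent lines, and a short induction (Lemma~\ref{lem:CPInter}) shows that triangulating $\mathcal{P}$ then forces roughly as many \emph{non-tangent} lines as there are tangents to that arc. Picking the most populous arc gives $g\ge \tfrac{1}{3}k_\Delta-3$, and combining with the analogous bound coming from the oval in the two-component case yields $7g\ge k-21$, hence $n\ge \tfrac{8}{7}k-3$. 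The constant $\tfrac{8}{7}$ thus arises from the interaction of two convexity constraints specific to the geometry of $\Gamma$, not from a refinement of Melchior's count.
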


Finally, we prove in Section~\ref{sec:rigidity} that a simplicial arrangement cannot differ from a regular arrangement by only few lines unless it is actually a regular arrangement (up to the addition of the line at infinity in the case of arrangements obtained from a regular polygon with an even number of sides). This is the  final step in the proof of Theorem~\ref{thm:MAIN}.\newline

\emph {Acknowledgements.} We are grateful to Sergey Galkin and Alexey Panov for discussions of dual cubics. The second author is sincerely grateful to the King's College for the hospitality in June 2022.\newline

\emph {Data availability statement.} Data sharing not applicable to this article as no datasets were generated or analysed during the current study.

\section{Generalities about simplicial arrangements}\label{sec:generalities}

\subsection{Conventions}

An arrangement of finitely many line defines an embedded graph in the real projective plane $\mathbb{RP}^{2}$. {\it Vertices} and {\it edges} of the arrangement are the vertices and edges of the graph. {\it Faces} are the connected components of the complement of the graph in $\mathbb{RP}^{2}$, these are projective polygons cut out by lines. A line arrangement is \textit{simplicial} if each face is a triangle.\newline

We use the term \textit{incident} in the standard way to describe incidence relations between vertices, edges and triangles of a simplicial arrangement. We also use the term incident for a vertex belonging to a line.
\par
We use the term \textit{adjacent} as follows:
\begin{itemize}
\item two vertices are \textit{adjacent} if they are the endpoints of a same edge of the arrangement;
\item one vertex and one edge are \textit{adjacent} if they belong to the boundary of a same triangle of the arrangement.
\end{itemize}

For two vertices $X,Y$, we denote by $XY$ the unique line incident to both $X$ and $Y$. We may denote by $[XY]$ one of the two segments between $X$ and $Y$ in line $XY$.

\begin{defn}
In a line arrangement, the \textit{order} $k_{X}$ of a vertex $X$ is the number of its incident lines.
\end{defn}




\subsection{Exceptional properties of family $\mathcal{R}(0)$}

Near-pencils, or equivalently simplicial arrangements of combinatorial classes $\mathcal{A}(k+1,0)$ for $k \geq 2$ are very special among simplicial arrangements. The following result appears as Lemma 3.2 in \cite{Cu1} in the more general context of simplicial wirings, where it is presented without proof as it is considered folklore. We include a proof here for the sake of completeness.

\begin{lem}\label{lem:adjaDP}
A simplicial arrangement contains two adjacent double points if and only if it belongs to family $\mathcal{R}(0)$.
\end{lem}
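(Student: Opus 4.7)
The ``if'' direction is immediate: in any $\mathcal{A}(k+1,0)$ with $k\ge 2$, the unique line not passing through the base point $C$ carries $k$ double points, cyclically arranged along it, and any two consecutive such double points are connected by a single edge.

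For the converse, I would suppose that $X$ and $Y$ are two adjacent double points sharing an edge $e \subset L_0$, and denote by $L_1, L_2$ the other lines of $\mathcal{A}$ through $X$ and $Y$ respectively. The plan is then to prove that the point $Z := L_1 \cap L_2$ (which is well defined and distinct from $X,Y$ because $L_0, L_1, L_2$ are three different lines) is the common point of every line of $\mathcal{A}\setminus\{L_0\}$, hence that $\mathcal{A}$ realises the combinatorial type $\mathcal{A}(n,0)$.

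I would argue as follows. Consider the two triangles of $\mathcal{A}$ bordering $e$. The third vertex of each must lie on a line through $X$ other than $L_0$, which is forced to be $L_1$ because $X$ is a double point, and by the symmetric argument at $Y$ it must lie on $L_2$; hence both triangles have $Z$ as their third vertex. Being distinct triangles that already share $e$, they cannot share another edge, so their $[XZ]$-edges are the two different arcs of $L_1$ from $X$ to $Z$; as these two arcs exhaust $L_1$, the only vertices of $\mathcal{A}$ lying on $L_1$ are $X$ and $Z$. Now any line $L' \in \mathcal{A}\setminus\{L_0,L_1\}$ meets $L_1$ at a vertex, which cannot be the already saturated double point $X$, so it passes through $Z$. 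Together with $L_1 \ni Z$ and $L_0 \not\ni Z$ (otherwise $Z\in L_0\cap L_1=\{X\}$), this shows that precisely the $n-1$ lines of $\mathcal{A}\setminus\{L_0\}$ form a pencil through $Z$, yielding the announced near-pencil structure.

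The most delicate step is the first one. One must resist the affine intuition that the triangles bordering the ``two sides'' of $e$ have geometrically distinct third vertices: in $\mathbb{RP}^2$ the lines $L_1$ and $L_2$ share a single intersection point $Z$, and the two triangles are distinguished not by their vertex sets but by which of the two arcs of $L_1$ (and symmetrically of $L_2$) between $X$ and $Z$ (resp. $Y$ and $Z$) they use as their remaining edges.
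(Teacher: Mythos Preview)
Your proof is correct and follows essentially the same route as the paper's: both identify the common third vertex $Z=L_1\cap L_2$ of the two triangles bordering $e$ and deduce that every other line must pass through $Z$. The paper phrases the last step as ``the union of the two triangles is a digon whose extremal points coincide in $\mathbb{RP}^2$, so every other line meets this digon at its unique vertex $Z$'', while you phrase it as ``the two arcs of $L_1$ exhaust $L_1$, so $L_1$ has only the vertices $X$ and $Z$, and any further line must meet $L_1$ at $Z$''; these are two descriptions of the same phenomenon, yours being slightly more explicit about why the two triangles use opposite arcs.
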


\begin{proof}
If an edge is incident to two double points, then the union of the two triangles incident to the edge is a digon (i.e. a polygon with two edges) whose extremal points coincide in $\mathbb{RP}^{2}$. Therefore, every other line of the arrangement intersects the digon at its unique vertex. This describes exactly a near-pencil or equivalently an arrangement of $\mathcal{R}(0)$.
\end{proof}

\begin{lem}\label{lem:R0edge} Suppose $A$ and $B$ are two adjacent vertices in a simplicial arrangement $\mathcal{A}$. Then at least one the following claims holds:
\begin{itemize}
\item there is exactly one edge $[AB]$ such that $A$ and $B$ both are incident to $[AB]$;
\item $\mathcal{A}$ belongs to family $\mathcal{R}(0)$.
\end{itemize}
\end{lem}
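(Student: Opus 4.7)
The plan is to negate the first alternative and deduce the second: assume $A$ and $B$ are adjacent with at least two edges of $\mathcal{A}$ having endpoints $\{A,B\}$, and derive $\mathcal{A}\in\mathcal{R}(0)$. Since $A\ne B$, they determine a unique line $AB$ in $\mathbb{RP}^2$, so both edges must lie on $AB$; being the two arcs of the projective line $AB$ between $A$ and $B$, they force $A$ and $B$ to be the only vertices of $\mathcal{A}$ on $AB$. Any other line of $\mathcal{A}$ then meets $AB$ at a vertex, which has to be $A$ or $B$, so every line of $\mathcal{A}$ is incident to $A$ or to $B$.

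Next I split on the orders $k_A,k_B$ of the two vertices. If $\min(k_A,k_B)=2$, say $k_A=2$, then besides $AB$ there is a unique line $a_1$ through $A$, and $a_1$ is not incident to $B$ (otherwise $a_1=AB$); all the remaining lines of $\mathcal{A}$ pass through $B$, so $\mathcal{A}$ is a pencil of $k_B$ lines through $B$ augmented by the single extra line $a_1$, which is precisely a near-pencil, hence $\mathcal{A}\in\mathcal{R}(0)$.

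The remaining case is $k_A,k_B\ge 3$, which I treat by reduction to Lemma \ref{lem:adjaDP}. Fix a line $b_1\ne AB$ through $B$; its vertices on $\mathcal A$ are $B$ together with the $k_A-1\ge 2$ points $a_i\cap b_1$, where $a_i$ ranges over the lines through $A$ distinct from $AB$. Each $a_i\cap b_1$ is a double point, because any line of $\mathcal{A}$ containing it passes through $A$ or through $B$, and any further line through $A$ (resp.\ $B$) that also passes through $a_i\cap b_1$ would coincide with $a_i$ (resp.\ $b_1$). On the projective line $b_1$, among the $k_A\ge 3$ cyclically ordered vertices only one is $B$, so some edge of $\mathcal{A}$ on $b_1$ joins two double points of the form $a_i\cap b_1$ and $a_j\cap b_1$. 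Lemma \ref{lem:adjaDP} applied to this edge places $\mathcal{A}$ in $\mathcal{R}(0)$, contradicting the fact that a near-pencil has a single vertex of order at least three while $A$ and $B$ both have order $\ge 3$.

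The main point to be careful about is the last step: one must check that every $a_i\cap b_1$ is a genuine double point and that the cyclic order on $b_1$ indeed produces an edge joining two of them, rather than an edge with $B$ as one endpoint. Once these two elementary geometric observations are in place, the conclusion follows immediately from Lemma \ref{lem:adjaDP}.
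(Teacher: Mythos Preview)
Your proof is correct and follows essentially the same strategy as the paper: deduce that every line passes through $A$ or $B$, hence every other vertex is a double point, then locate two adjacent double points and invoke Lemma~\ref{lem:adjaDP}. The only cosmetic differences are that you find the adjacent double points along a line $b_1$ through $B$ whereas the paper uses the star of $A$, and you treat the case $\min(k_A,k_B)=2$ directly as a near-pencil while the paper handles $k_A=k_B=2$ via Lemma~\ref{lem:Euler}; your final ``contradiction'' in the case $k_A,k_B\ge 3$ is harmless but unnecessary, since at that point you have already obtained $\mathcal{A}\in\mathcal{R}(0)$.
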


\begin{proof}
If two vertices $A$ and $B$ of a line arrangement are the endpoints of two distinct edges $E$ and $F$, then $E \cup F$ form a line of the arrangement (because exactly one line of the projective plane passes through two given points). In particular, the are exactly two edges incident to both $A$ and $B$.
\par
Since every other line of the arrangement crosses $E \cup F$,  every line passes through either $A$ or $B$. Therefore, any vertex distinct from $A$ and $B$ is a double point where a line incident to $A$ intersects a line incident to $B$.
\par
Assuming $A$ is a vertex of order $k$, $2k$ edges of arrangement $\mathcal{A}$ are incident to $A$. Two of them are incident to $B$ while the $2k-2$ other edges are incident to double points. Unless $A$ is a double point, two double points incident to $A$ are adjacent. Since the arrangement is simplicial, Lemma~\ref{lem:adjaDP} implies that the arrangement belongs to family $\mathcal{R}(0)$. The same reasoning applies to $B$. If every vertex of $\mathcal{A}$ is a double point, then Lemma~\ref{lem:adjaDP} again proves that the arrangement belongs to $\mathcal{R}(0)$. In other words, it is a near-pencil where every line except one is incident to a point (which is a double point). In this case, we obtain arrangement $\mathcal{A}(3,0)$.
\end{proof}

\subsection{Star of a vertex}\label{sub:star}

\begin{defn}
In a simplicial arrangement, the \textit{star} $S(A)$ of a vertex $A$ is the union of all triangles incident to $A$.
\par
The \textit{exterior vertices} of the star $S(A)$ are the vertices of $S(A)$ that are distinct from $A$.
\par
The \textit{exterior edges} of $S(A)$ are sides of triangles of $S(A)$ that are opposite to $A$.
\end{defn}

We deduce strong results on the star of a vertex in simplicial arrangements that do not belong to $\mathcal{R}(0)$.

\begin{prop}\label{prop:star1}
Let $\mathcal{A}$ be a simplicial arrangement that does not belong to family $\mathcal{R}(0)$ and let $C$ be a vertex of order $k$. Then the following statements hold:
\begin{itemize}
    \item The star $S(C)$ of $C$ is a convex polygon.
    \item Two exterior edges of $S(C)$ belong to the same line if and only if they are consecutive and their common  vertex is a double point.
    \item The boundary of $S(C)$ has $2k$ distinct exterior vertices. Among them, there are $d \leq k$ double points and the exterior edges of $S(C)$ belong to $2k-d \geq k$ distinct lines, all of which are not incident to $A$.
\end{itemize}
\end{prop}

\begin{proof}
Consider an exterior edge $[AB]$ of $S(C)$. Then line $AB$ cannot contains $C$ because $A,B,C$ are the vertices of a triangle. We define $\mathcal{F}_{C}$ as the subarrangement of $\mathcal{A}$ formed by lines containing an exterior edge of $S(C)$. These lines are not incident to $C$ so they do not intersect the interior of $S(C)$. This implies that $S(C)$ is a face of subarrangement $\mathcal{F}_{C}$. For an arrangement that does not belong to family $\mathcal{R}(0)$, vertices of the boundary of the star are distinct from each other. Otherwise, one exterior vertex would be connected to the central one with two distinct edges, see Lemma~\ref{lem:R0edge}. In particular, $S(C)$ cannot be a digon, $\mathcal{F}_{C}$ contains at least three lines and $S(C)$ is a convex polygon of $\mathbb{RP}^{2}$.
\par
Consider a line $L$ that contains an exterior edge of $S(C)$. $L \cap S(C)$ is convex so it should be entirely contained in the boundary of $S(C)$. This implies that $L \cap S(C)$ is the union of consecutive exterior edges of $S(C)$. We assume $[AB]$ and $[BD]$ are two such consecutive exterior edges of $S(C)$ that belong to the same line $L=BD$. Thus $B$ is a double point (the other line incident to $B$ being $BC$).
\par
Since there is no pair of adjacent double points in an arrangement that does not belong to $\mathcal{R}(0)$ (see Lemma \ref{lem:adjaDP}), every line contains at most two exterior edges of $S(C)$. The number $d$ of double points among exterior vertices of $S(C)$ thus satisfies $d \leq k$. Since a line contains one or two exterior edges of $S(C)$ depending whether the common exterior vertex is a double point or not, subarrangement $\mathcal{F}_{C}$ contains at least $2k-d$ distinct lines.
\end{proof}

Each arrangement cuts $\mathbb {RP}^2$ into several convex polygons. It turns out that in case the number of sides of one of these polygons coincides with the number of lines of the arrangement, the arrangement has a particularly simple form.

\begin{lem}\label{lem:centralpolygon}
If one face of an arrangement $\mathcal{A}$ of $m \geq 3$ lines is a convex $m$-gon $\mathcal{P}$ in $\mathbb{RP}^{2}$, then $\mathcal{A}$ is combinatorially equivalent to the arrangement formed by $m$ tangent lines of a unit circle in such a way that the tangency points form a regular $m$-gon. In particular, every vertex of $\mathcal{A}$ is a double point and the faces are polygon $\mathcal{P}$, $m$ triangles adjacent to $\mathcal{P}$ and $\frac{m(m-3)}{2}$ quadrilaterals.
\end{lem}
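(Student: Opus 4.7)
The plan is to combine a convexity argument, showing that no three lines of $\mathcal{A}$ can be concurrent, with Euler's formula and a combinatorial uniqueness statement, comparing the resulting structure to the explicit tangent-line model.

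First, since $\mathcal{P}$ has $m$ edges and $\mathcal{A}$ consists of $m$ lines, each line of $\mathcal{A}$ must contain exactly one edge of $\mathcal{P}$. I label them $L_1,\dots,L_m$ cyclically according to their edges on $\partial\mathcal{P}$ and set $V_i=L_i\cap L_{i+1}$ (indices modulo $m$) for the vertices of $\mathcal{P}$.

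The main step is to prove that no three distinct lines $L_i, L_j, L_k$ can be concurrent at a point $P$. I would choose an affine chart of $\mathbb{RP}^{2}$ containing $\mathcal{P}$ in its bounded part. If $P$ lies at infinity in this chart, the three lines are parallel, contradicting the elementary fact that a bounded convex polygon admits at most two parallel supporting lines in any given direction. If $P$ is finite, then $P$ cannot lie in the interior of $\mathcal{P}$ (each line of $\mathcal{A}$ being a supporting line), nor on $\partial\mathcal{P}$ (a boundary point of a convex polygon is incident to at most two of its supporting lines), so $P$ must be in the exterior of $\mathcal{P}$. Then $L_i, L_j, L_k$ partition the affine plane into six open angular sectors at $P$; convexity forces $\mathcal{P}$ to lie in the closure of a single such sector, which is bounded by only two of the three lines, contradicting that all three carry an edge of $\mathcal{P}$. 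Hence every vertex of $\mathcal{A}$ is a double point.

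This yields $V=\binom{m}{2}$ vertices, and since in $\mathbb{RP}^{2}$ each line is a topological circle subdivided by its $m-1$ intersections with the other lines into $m-1$ arcs, one has $E=m(m-1)$, so Euler's formula gives $F=1+\binom{m}{2}$. I would then invoke combinatorial equivalence with the explicit tangent-line model from the statement: the cyclic labelling of lines together with the absence of triple concurrences determines the combinatorial type, as one can join any two such arrangements by a continuous path of convex-position arrangements along which no triple concurrence appears, and the combinatorial type is locally constant on the complement of the triple-concurrency locus. Matching with the model then produces precisely the face list $\mathcal{P}$, the $m$ triangles adjacent to $\mathcal{P}$, and $\binom{m}{2}-m=\frac{m(m-3)}{2}$ quadrilaterals.

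The principal obstacle is the triple-concurrency step, and in particular the sub-case where the putative triple point lies at infinity of the chosen chart; the parallel-supporting-lines remark disposes of this neatly. The subsequent counting and face identification are routine once this structural fact is in place.
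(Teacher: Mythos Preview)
Your argument is correct and follows essentially the same route as the paper: a convexity argument showing that no three lines of $\mathcal{A}$ are concurrent, then path-connectedness of the space of convex $m$-gons to deduce combinatorial equivalence with the regular tangent-line model. The paper states the first step a bit more cleanly, working directly in $\mathbb{RP}^{2}$ (through any point outside a convex region pass exactly two tangent lines, hence at most two lines of $\mathcal{A}$), whereas your affine-chart case split reaches the same conclusion; note only that the phrase ``a boundary point of a convex polygon is incident to at most two of its supporting lines'' is false as written (a vertex admits a whole pencil of supporting lines), though the intended statement---that at most two of the edge-carrying lines $L_1,\dots,L_m$ pass through any point of $\partial\mathcal{P}$---is what you actually need and is true.
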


\begin{proof}
Since $\mathcal{P}$ is a convex domain, for every point $x$ outside $\mathcal{P}$, there are two lines $L$ and $L'$ incident to $x$ that are {\it tangent} to $\mathcal{P}$, i.e. they intersect $\mathcal{P}$ but do not cross its interior. These two lines define four angular sectors at $x$. For two of them, lines incident to $x$ belonging to these sectors do not intersect $\mathcal{P}$. For the two others, they cross the interior of $\mathcal{P}$ (and cut its boundary twice).
\par
Consequently, there is no point on $\mathbb{RP}^{2}$ that is incident to more than two lines tangent to $\mathcal{P}$. This implies every vertex of $\mathcal{A}$ is a double point. Every such arrangement is characterized by the shape of its central polygon $\mathcal{P}$. The moduli space of convex $m$-gons in $\mathbb{RP}^{2}$ is path connected and along every path in this moduli space, no vertex of higher order appears. This implies that the incidence structures of each of these arrangements are isomorphic to the incidence structure of the arrangement formed by $m$ regularly disposed tangent lines along a circle.
\end{proof}

\section{Recollection on cubic curves and their duals}\label{sub:classification}
In this short section, we give a qualitative description of real cubic curves and their duals. We denote by $\hat\Gamma$ a cubic curve in $\mathbb{RP}^{2}$ and by $\Gamma$ the projectively dual curve. By definition, $\Gamma$ is formed by all lines tangent to $\hat\Gamma$, where for a singular point  $x\in\hat\Gamma$ all lines containing $x$ are tangent to $\hat\Gamma$. Taking multiplicities into account, $\Gamma$ is defined by a degree $6$ polynomial. 

Recall that every irreducible planar cubic curve $\hat \Gamma$ can be written in Weierstrass form $y^2=x^3+ax+b$. An irreducible cubic curve 
can have at most one singularity, namely, a double point, a cusp or an isolated point (see Theorem~8.4 in \cite{Bix}), this happens when the polynomial $x^3+ax+b$ has a double root. Altogether, we have $3$ such irreducible cubic curves up to an isomorphism:
\begin{itemize}
    \item $\hat{\Gamma}$ is a \textit{nodal irreducible cubic} ($y^2=(x-1)^2(x+2)$):
    the dual cubic $\Gamma$ is a union of a closed curve with one cusp and its unique bitangent line.
    \item $\hat{\Gamma}$ is a \textit{semicubical parabola} ($y^2=x^3$): the dual cubic $\Gamma$ is the union of a semicubic parabola and the line at infinity.
    \item $\hat{\Gamma}$ is an \textit{acnodal irreducible cubic} ($y^{2}=(x+1)^2(x-2)$): the dual cubic $\Gamma$ is the disjoint union of a curvilinear triangle with three cusps and a line (dual to the isolated singular point of $\hat{\Gamma}$).
\end{itemize}

Next, a smooth irreducible cubic can consist of either one or two connected components. One component intersects every line in $\mathbb{RP}^2$ and has three inflection points, while the optional component is a convex oval:
\begin{itemize}
    \item $\hat{\Gamma}$ is a smooth irreducible cubic with one connected component: 
    the dual cubic $\Gamma$ is an embedded\footnote{Indeed $\Gamma$ has no self-intersections, since $\hat\Gamma$ has no bitangents} curvilinear triangle with three cusps joined pairwise by three locally convex arcs.
    \item $\hat{\Gamma}$ is a smooth irreducible cubic with two connected components: 
    the dual cubic $\Gamma$ is a curvilinear triangle with three cusps (as above) contained in an oval.
\end{itemize}

\begin{table}[ht]
\centering
\begin{tabular}{|c|c|c|c|c|} 
  \hline
  
    & \textbf{Nodal} & \textbf{Acnodal} & \textbf{Smooth} & \textbf{Smooth} \\
    & & & \textbf{connected} & \textbf{disconnected}\\
    \hline
    \noalign{\vskip 0.3mm}
    \raisebox{1.7cm}{\textbf{Primary}}&
   
    \includegraphics[width=0.16\textwidth]{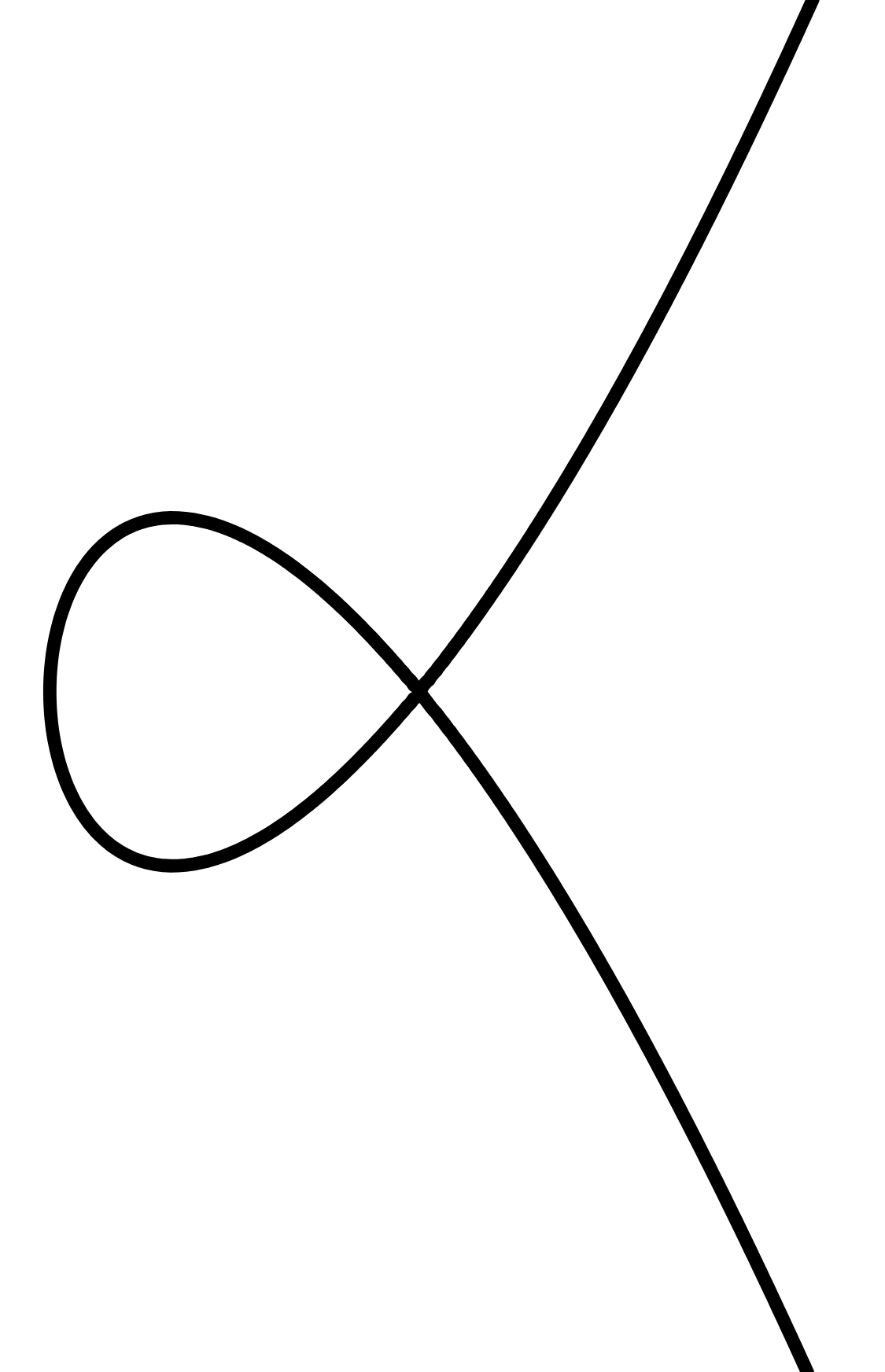} &
    \includegraphics[width=0.17\textwidth]{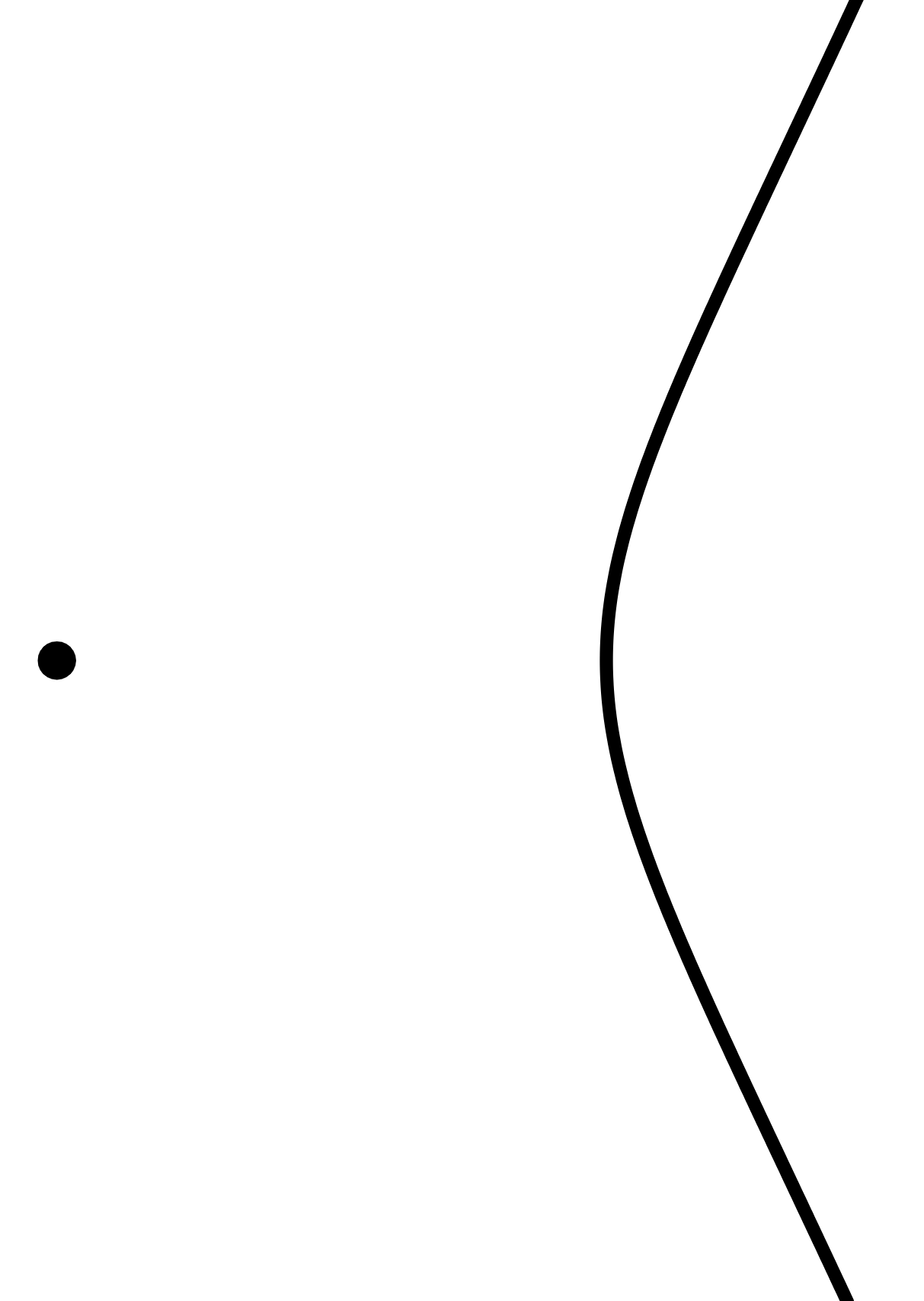} &
    \includegraphics[width=0.18\textwidth]{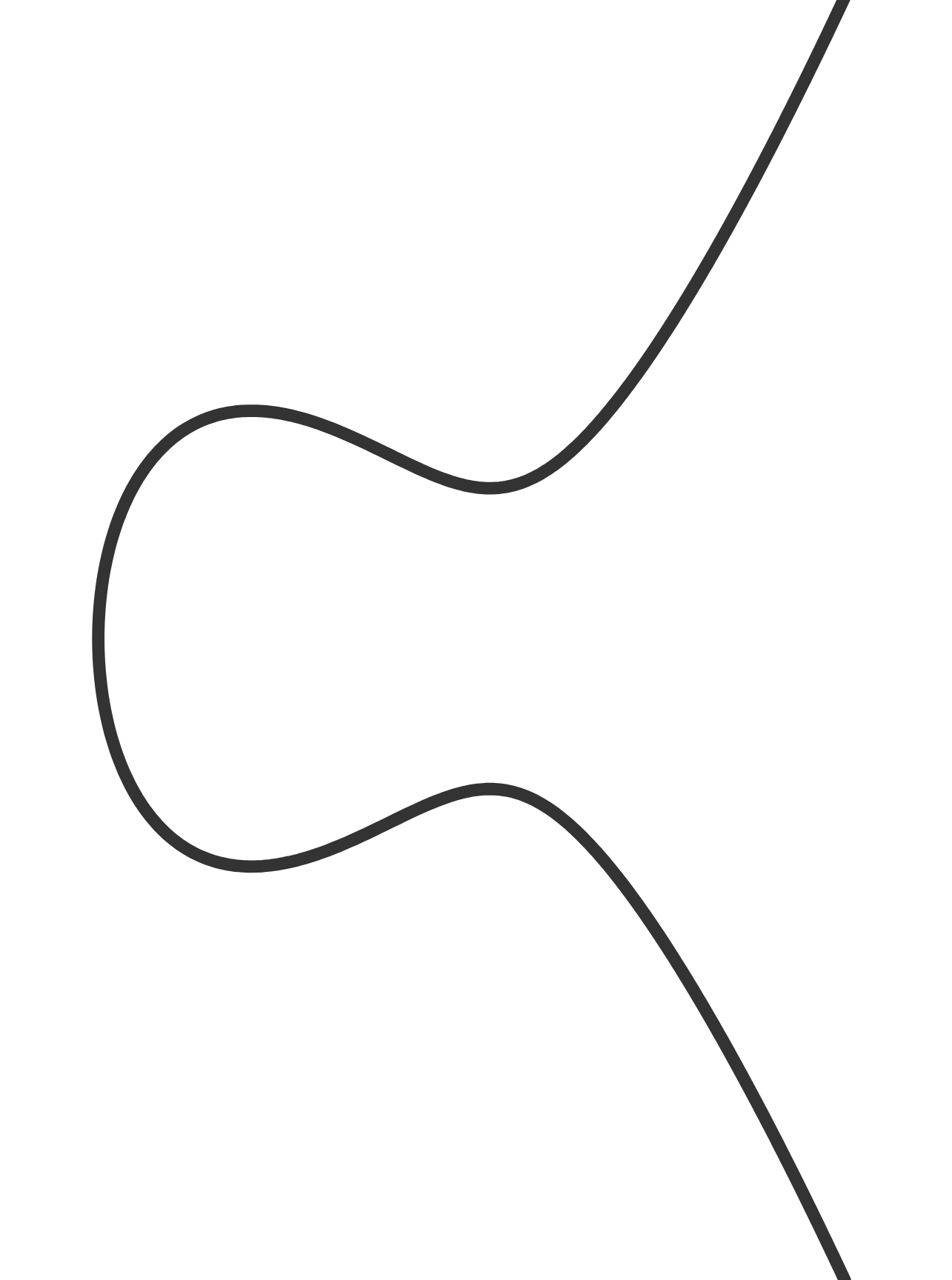} &
    \includegraphics[width=0.20\textwidth]{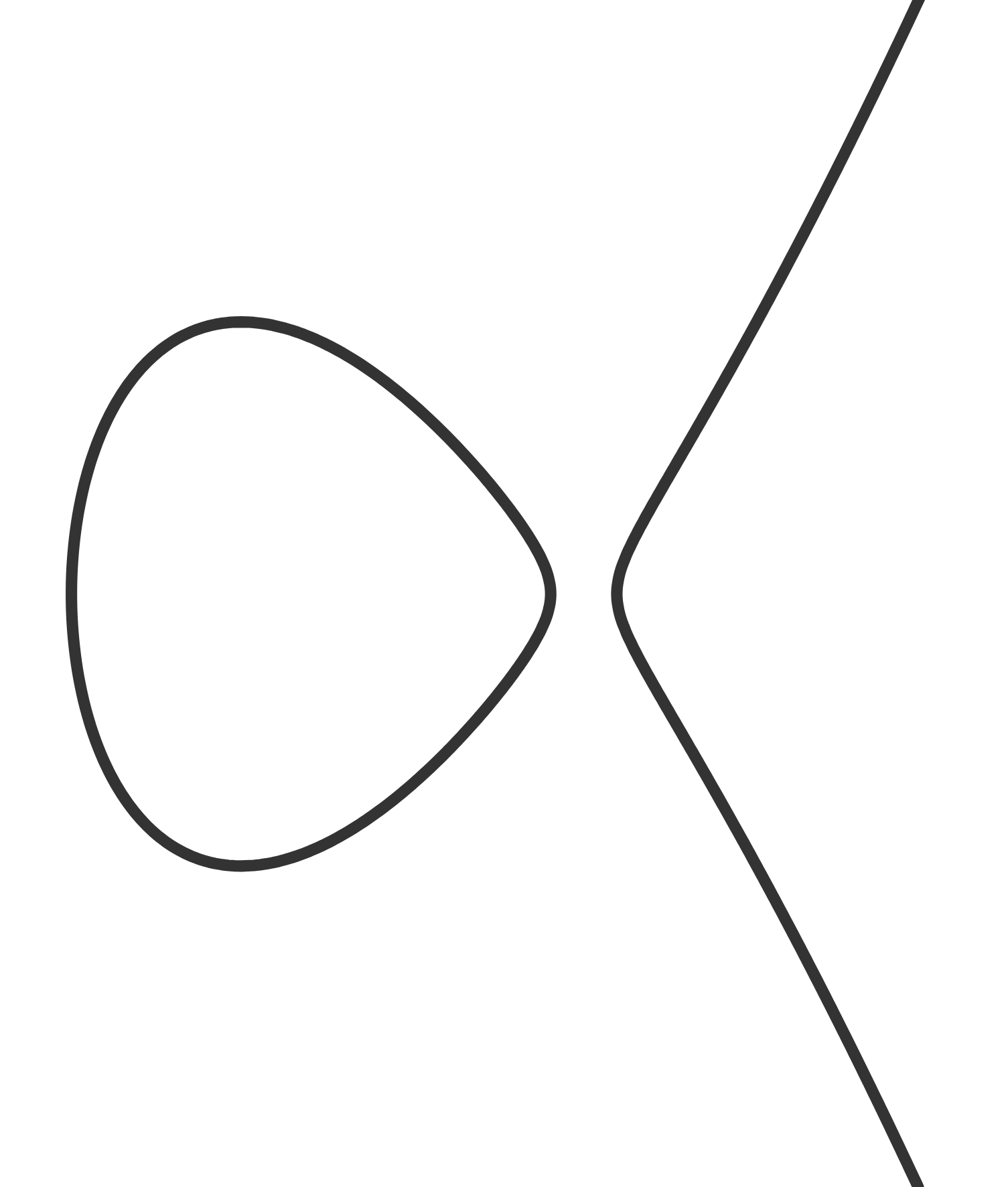} \\
     \hline 
     \noalign{\vskip 0.3mm}
    \raisebox{1.5cm}{\textbf{Dual}} &
    \includegraphics[width=0.13\textwidth]{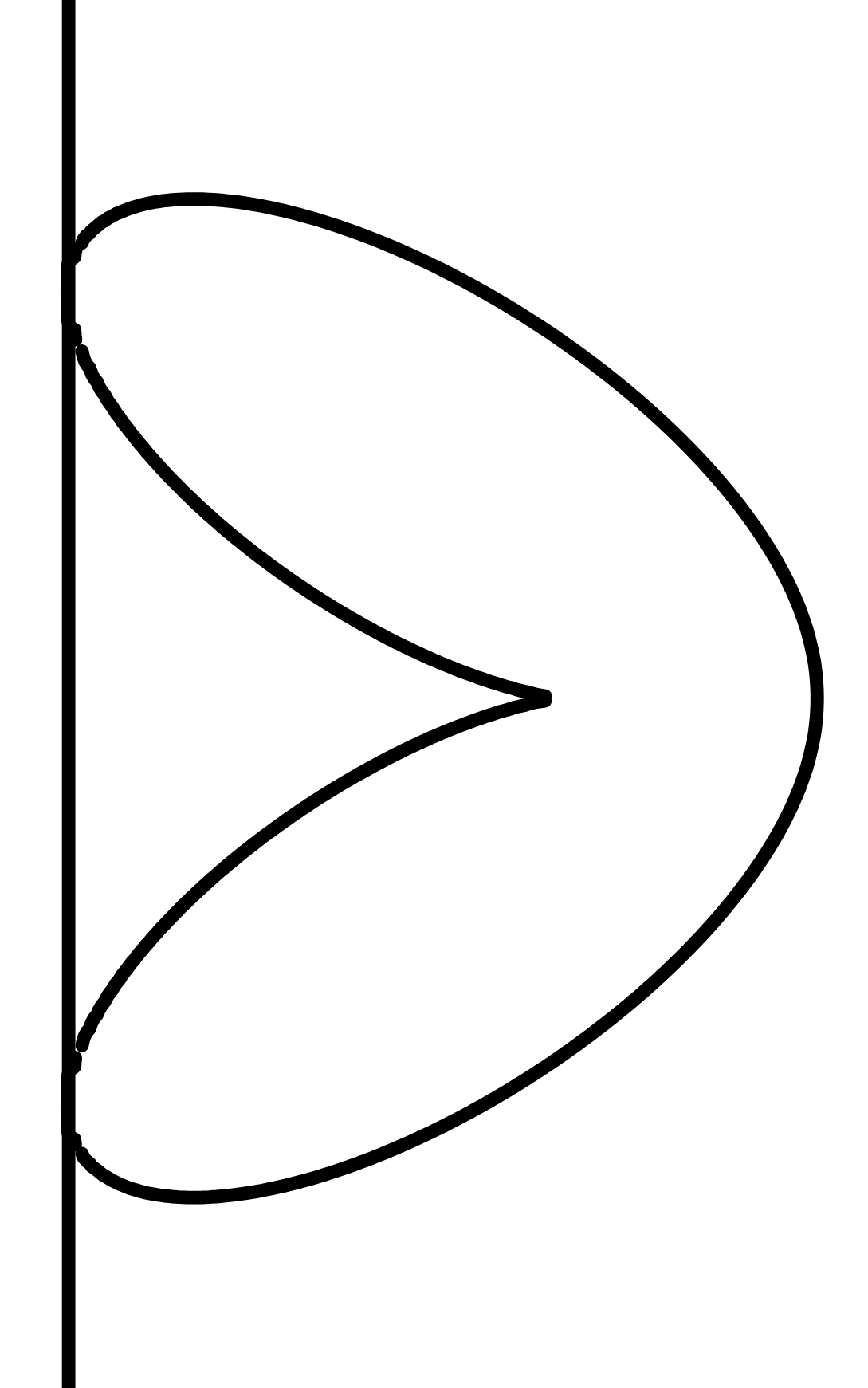} &
    \raisebox{0.25cm}{\includegraphics[width=0.17\textwidth]{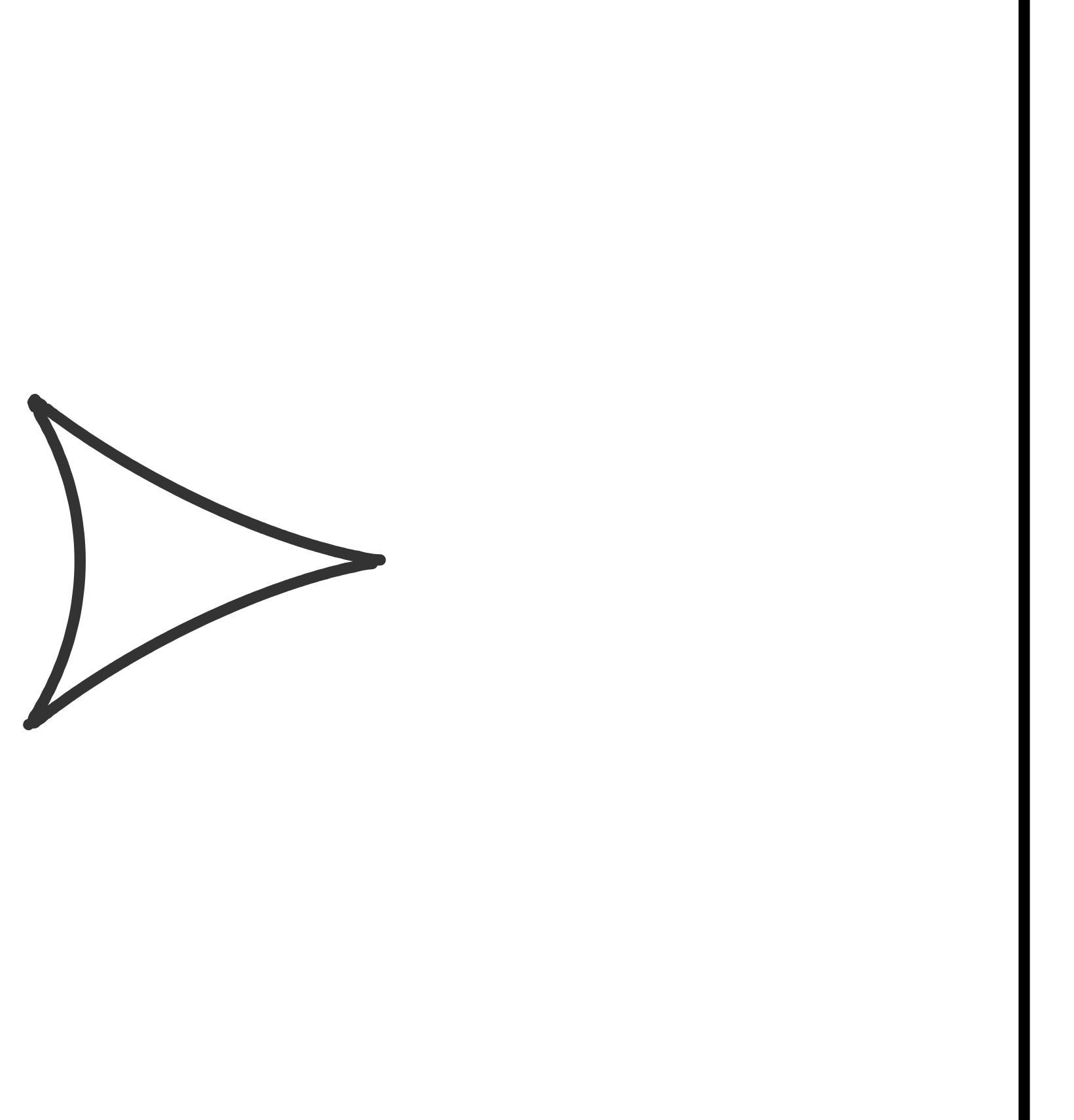}} &
    \hspace{0.8cm}\raisebox{0.25cm}{\includegraphics[width=0.29\textwidth]{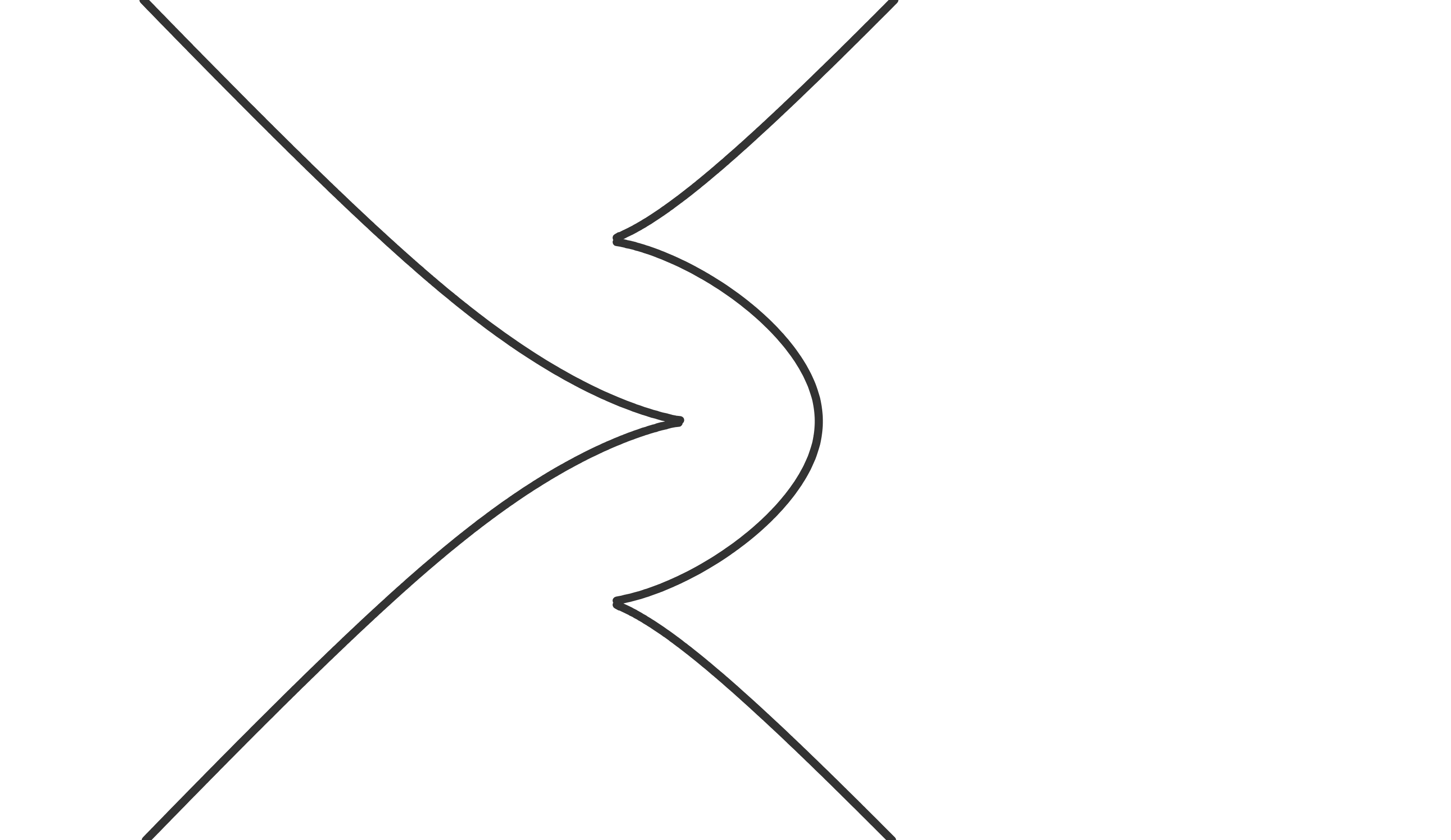}} &
    \includegraphics[width=0.18\textwidth]{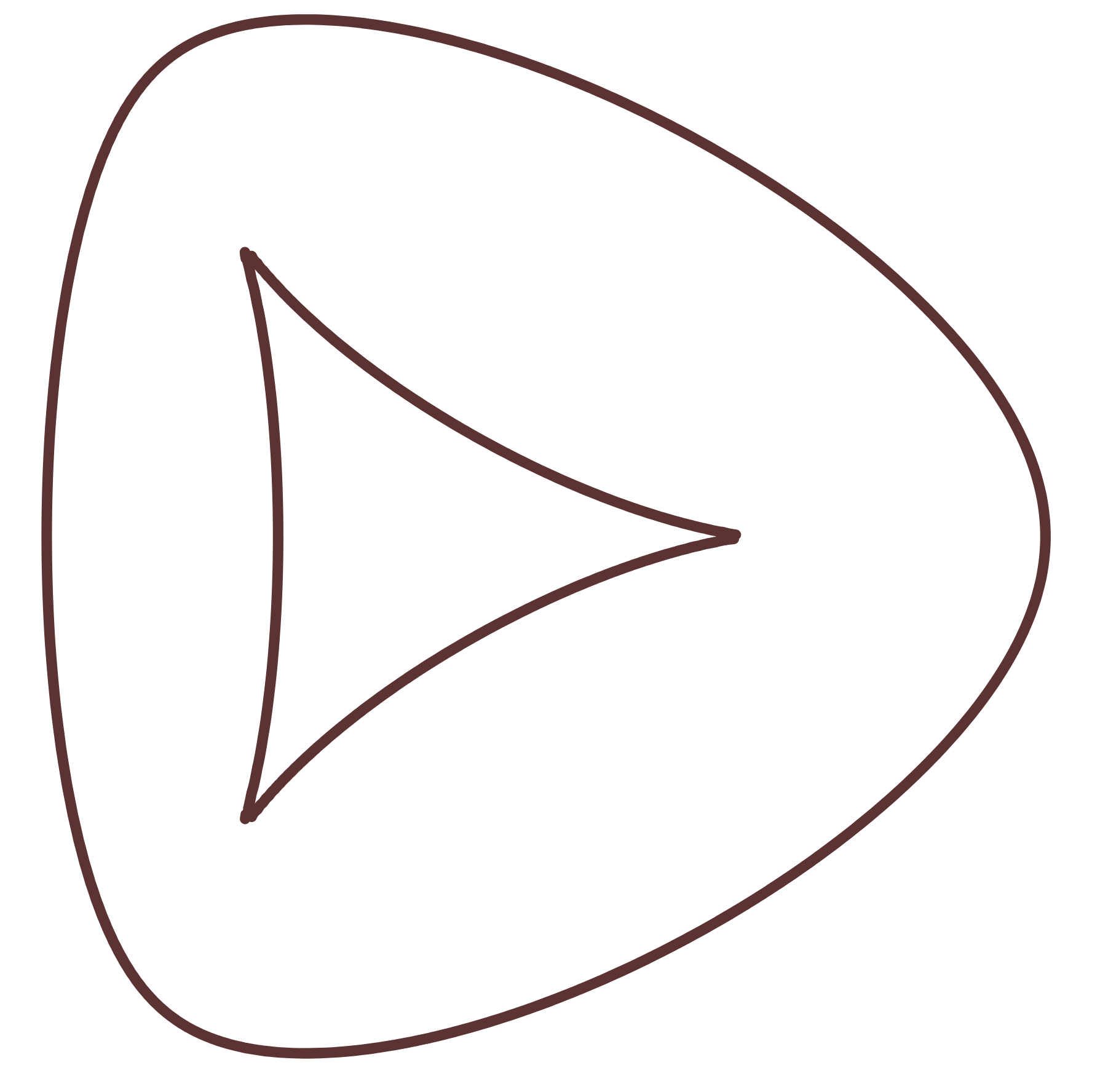} \\
      \hline
\end{tabular}
\caption{Some cubic curves and their projective duals}
\label{tab:Cubics}
\end{table}

Examples of irreducible cubic curves and their dual curves are given in Table~\ref{tab:Cubics}. Here the cubic curve $\hat \Gamma$ is called primary, and the dual cubic curve $\Gamma$ is called dual. We don't provide the figure for the easiest case - the semi-cubic parabola.

\section{Estimates on lines tangent to a dual cubic in a simplicial arrangement}\label{sec:estimates}

The main goal of this section is to prove Proposition~\ref{prop:irreducible}.


\subsection{Families of lines decomposing a convex polygon into triangles}

The following lemma will be the main tool to prove the estimates of Proposition~\ref{prop:irreducible}.

\begin{lem}\label{lem:CPInter}
Let $(\mathcal{F},\mathcal{G})$ be a pair of disjoint families of lines intersecting the interior of a convex polygon $\mathcal{P}$ (with at least three sides) of $\mathbb{RP}^{2}$ such that:
\begin{itemize}
    \item $\mathcal{F}\cup \mathcal{G}$ cuts out $\mathcal{P}$ into triangles;
    \item lines of $\mathcal{F}$ intersect each other outside $\mathcal{P}$ (including its boundary).
\end{itemize}
Then we have $|\mathcal{G}| \geq |\mathcal{F}|-1$.
\end{lem}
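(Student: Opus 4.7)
\textit{Proof plan.} Set $f:=|\mathcal{F}|$ and $g:=|\mathcal{G}|$. The approach is to decompose $\mathcal{P}$ along the lines of $\mathcal{F}$ and apply Euler's formula inside each resulting piece. Because no two lines of $\mathcal{F}$ meet in the closed polygon $\overline{\mathcal{P}}$, their chords are pairwise disjoint there and partition $\mathcal{P}$ into $f+1$ convex sub-polygons $\mathcal{P}_0,\dots,\mathcal{P}_f$ whose adjacency (via shared $\mathcal{F}$-chords) is a tree $T$. Write $p_i$ for the number of sides of $\mathcal{P}_i$, $d_i$ for its valence in $T$, and $q$ for the number of $\mathcal{F}$-chord endpoints that land at vertices of $\mathcal{P}$. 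A direct corner count gives $\sum_i p_i=p+4f-q$, while the disjointness of the $\mathcal{F}$-chord endpoints forces the crucial inequality $p_i\geq 2d_i$ for every $i$.

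Next, apply Euler's formula inside each $\mathcal{P}_i$, whose interior is triangulated purely by $\mathcal{G}$-chord segments. This yields the identity $g_i=v_i+c_i-3$, where $g_i$ is the number of $\mathcal{G}$-segments in $\mathcal{P}_i$, $v_i\geq p_i$ is the number of triangulation vertices on $\partial\mathcal{P}_i$, and $c_i$ is the number of $\mathcal{G}\cap\mathcal{G}$ crossings interior to $\mathcal{P}_i$. Since each $\mathcal{G}$-line contributes one segment per sub-polygon it traverses, one has $\sum g_i=g+c_{FG}$ (with $c_{FG}$ the number of $\mathcal{F}\cap\mathcal{G}$ crossings inside $\mathcal{P}$), together with the splitting $\sum v_i-\sum p_i=2c_{FG}+n_G$, where $n_G\geq 0$ counts $\mathcal{G}$-endpoints landing strictly inside a boundary side of some $\mathcal{P}_i$. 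Summing the local identities and rearranging produces
\[
g-(f-1)=(p-q-2)+c_{FG}+n_G+c_{GG}.
\]
Since the three correction terms on the right are non-negative, the whole problem reduces to the inequality $q\leq p-2$.

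This reduction is where the main obstacle lies. From $p_i\geq 2d_i$ it follows that a triangle sub-polygon (with $p_i=3$) must be a leaf of $T$ (non-leaves have $d_i\geq 2$, hence $p_i\geq 4$), and such a leaf has exactly $p_i-2d_i=1$ corner that is not an $\mathcal{F}$-endpoint, namely a vertex of $\mathcal{P}$ through which no $\mathcal{F}$-chord passes. Because each such ``free'' vertex is a corner of a unique sub-polygon, the number $T_{\mathrm{sub}}$ of triangle sub-polygons satisfies $T_{\mathrm{sub}}\leq p-q$. Conversely, bounding non-triangle pieces by $p_i\geq 4$ in $\sum p_i=p+4f-q$ gives $T_{\mathrm{sub}}\geq 4-(p-q)$. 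Comparing the two bounds forces $p-q\geq 2$, which combined with the displayed identity yields $g\geq f-1$. This two-sided counting of triangle sub-polygons is the delicate part, since it is what uses the disjointness hypothesis on $\mathcal{F}$ in an essential way; the rest of the argument is Euler-characteristic bookkeeping that remains robust under mild degeneracies among $\mathcal{G}$-chord endpoints.
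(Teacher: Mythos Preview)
Your approach differs genuinely from the paper's. The paper argues by minimal counterexample: in a counterexample with $|\mathcal{G}|$ smallest, one looks at an extremal sub-polygon $\mathcal{P}_1$ of the $\mathcal{F}$-decomposition; either some line of $\mathcal{G}$ crosses its interior (and cutting $\mathcal{P}$ along that line produces a smaller counterexample), or $\mathcal{P}_1$ is already a triangle and one passes to the adjacent piece $\mathcal{P}_2$, which has at least four sides, to locate a removable $\mathcal{G}$-line there. No Euler counts or crossing tallies enter.

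There is, however, a genuine gap in your Euler bookkeeping. The local identity $g_i=v_i+c_i-3$ holds only when every interior vertex of $\mathcal{P}_i$ is a \emph{simple} crossing of two $\mathcal{G}$-segments; in general the correct relation is $g_i=v_i+c_i-3-\sum_v(\ell(v)-2)$ with $\ell(v)$ the number of $\mathcal{G}$-segments through the interior vertex $v$, and this defect has the wrong sign for a lower bound on $g$. A concrete failure: let $\mathcal{P}$ be a regular octagon, $\mathcal{F}=\emptyset$, and $\mathcal{G}$ the four long diagonals through the centre. This is a valid instance of the lemma (eight triangles), with $p=8$, $q=0$, $f=0$, $g=4$, $c_{FG}=n_G=0$ and a single interior vertex; your displayed identity then reads $5=7$, and the consequence $g-(f-1)\ge p-q-2$ on which your reduction rests becomes $5\ge 6$. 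So even though your two-sided count establishing $q\le p-2$ is correct and rather elegant, the reduction to it is invalid as stated, and the asserted ``robustness under mild degeneracies'' cannot hold in the form you need.
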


\begin{proof}
Assume by contradiction that the statement is false and take  a counter-example $(\mathcal{P},\mathcal{F},\mathcal{G})$  for which  $|\mathcal{G}|$ is minimal possible. We clearly have $|\mathcal{F}| \geq 2$. Since lines $\Lambda_{1},\dots,\Lambda_{|\mathcal{F}|}$ of $\mathcal{F}$ intersect each other outside $\mathcal{P}$, they cut $\mathcal{P}$ into $|\mathcal{F}|+1$ polygons $\mathcal{P}_{1},\dots,\mathcal{P}_{|\mathcal{F}|+1}$ in such a way that for any $1 \leq i \leq |\mathcal{F}|$ line $\Lambda_{i}$ contains a side of both $\mathcal{P}_{i}$ and $\mathcal{P}_{i+1}$.
\par
Assume by contradiction that some line $L$ of $\mathcal{G}$ intersects the interior of $\mathcal{P}_{1}$. Then the interior of one connected component $\mathcal{P}'$ of $\mathcal{P}\setminus L$ intersects every line of $\mathcal{F}$. We denote by $\mathcal{G}'$ the set of lines of $\mathcal{G}$ that intersect the interior of $\mathcal{P}'$. Triple $(\mathcal{P}',\mathcal{F},\mathcal{G}')$ is also a counter-example while $|\mathcal{G}'|\leq |\mathcal{G}|-1$. Then $(\mathcal{P},\mathcal{F},\mathcal{G})$ is not minimal. Consequently $\mathcal{P}_{1}$ is a triangle and no line of $\mathcal{G}$ intersects its interior.
\par
Since $|\mathcal{F}|\geq 2$, $\mathcal{P}_{2}$ is a polygon with at least four sides. Besides, no line of $\mathcal{G}$ intersects the interior of the edge $\mathcal{P}_{1} \cap \mathcal{P}_{2}$. Thus at least one line $L'$ of $\mathcal{G}$ contains an end vertex of $\mathcal{P}_{1} \cap \mathcal{P}_{2}$ (otherwise $\mathcal{P}_{2}$ would have just three sides). Then,
the interior of at least one connected component $\mathcal{P}'$ of $\mathcal{P} \setminus L'$ intersects lines $\Lambda_{2},\dots,\Lambda_{|\mathcal{F}|}$.
\par
We define again $\mathcal{G}'$ as the set of lines of $\mathcal{G}$ that intersect the interior of $\mathcal{P}'$ and $\mathcal{F}'= \lbrace{ \Lambda_{2},\dots,\Lambda_{|\mathcal{F}|}\rbrace}$. Then $(\mathcal{P}',\mathcal{F}',\mathcal{G}')$ is a counter-example such that $|\mathcal{F}'|=|\mathcal{F}|-1$ and $|\mathcal{G}'|\leq |\mathcal{G}|-1$. This contradicts minimality of $(\mathcal{P},\mathcal{F},\mathcal{G})$.
\end{proof}

\subsection{Proof of Proposition~\ref{prop:irreducible}}\label{sub:Reduction}
Let's first restate  Proposition~\ref{prop:irreducible} in the dual form.

\begin{prop}[Dual formulation] Let $\mathcal{A}$ be a simplicial arrangement of $n$ lines in $\mathbb{RP}^{2}$. Assume there exists an irreducible cubic $\hat \Gamma$ such that $k$ lines of $\mathcal{A}$ are {\rm tangent} to the dual cubic $\Gamma$. Then $n$ and $k$ satisfy $n \geq \frac{8}{7}k-3$.  
\end{prop}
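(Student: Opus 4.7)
Let $\mathcal{T}\subset\mathcal{A}$ be the subfamily of $k$ lines tangent to $\Gamma$ and set $\mathcal{N}=\mathcal{A}\setminus\mathcal{T}$, so $|\mathcal{N}|=n-k$; the desired inequality $n\geq\frac{8}{7}k-3$ is equivalent to $7|\mathcal{N}|\geq k-21$. The plan is to combine the classification of duals of irreducible cubics from Section~\ref{sub:classification} with repeated applications of Lemma~\ref{lem:CPInter} to suitable convex polygons attached to the locally convex arcs of $\Gamma$.

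\textbf{Decomposition.} Using the classification, I decompose $\Gamma$ as a disjoint union of maximal locally convex arcs $A_{1},\dots,A_{r}$ separated by cusps, with $r\in\{1,2,3,4\}$ depending on the case, together with at most one extra straight-line component: the bitangent in the nodal case, the line at infinity in the semicubical case, or the dual of the isolated singular point in the acnodal case. Let $k_{i}$ denote the number of lines of $\mathcal{T}$ tangent to $\Gamma$ at a point of $A_{i}$, and let $k_{0}$ be the number of lines of $\mathcal{T}$ lying on the extra straight line (if any), so that $k = k_{0}+k_{1}+\cdots+k_{r}$.

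\textbf{Applying the lemma.} For each arc $A_{i}$ with $k_{i}\geq 3$, I build a convex polygon $\mathcal{P}_{i}\subset\mathbb{RP}^{2}$ containing $A_{i}$ such that the $k_{i}$ tangent lines to $A_{i}$ form a non-crossing family of chords of $\mathcal{P}_{i}$, in the sense that their pairwise intersections all lie on $\partial\mathcal{P}_{i}$ or outside. The existence of such a $\mathcal{P}_{i}$ relies on the local convexity of $A_{i}$, which places $A_{i}$ in a single closed half-plane of each of its tangent lines. Setting $\mathcal{F}_{i}$ to be these tangent lines and $\mathcal{G}_{i}$ to be the lines of $\mathcal{A}\setminus\mathcal{F}_{i}$ that intersect the interior of $\mathcal{P}_{i}$, the simpliciality of $\mathcal{A}$ implies that $\mathcal{F}_{i}\cup\mathcal{G}_{i}$ triangulates $\mathcal{P}_{i}$, and Lemma~\ref{lem:CPInter} yields $|\mathcal{G}_{i}|\geq k_{i}-1$.

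\textbf{Double counting and main obstacle.} Summing over $i$ produces $\sum_{i=1}^{r}|\mathcal{G}_{i}|\geq k-k_{0}-r$. On the other hand each line of $\mathcal{A}$ is counted in $\sum_{i}|\mathcal{G}_{i}|$ with bounded multiplicity: a non-tangent line of $\mathcal{N}$ can cross at most $r$ of the polygons $\mathcal{P}_{i}$, while a line of $\mathcal{T}$ tangent along $A_{j}$ contributes only for indices $i\neq j$. A careful enumeration over the five types of dual cubics listed in Section~\ref{sub:classification} bounds the total multiplicity in such a way that the sum over $i$ translates into an inequality of the form $7|\mathcal{N}|\geq k-21$. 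The main technical obstacle is precisely this geometric control: constructing each $\mathcal{P}_{i}$ explicitly in every case, and then bounding tightly how many polygons $\mathcal{P}_{i}$ a single line of $\mathcal{A}$ can pierce. This is where the particular structure of dual cubics—the cusps, and the oval/triangle dichotomy of Section~\ref{sub:classification}—plays an essential role and produces the specific constant $\frac{8}{7}$, with the extremal case expected to occur for a smooth irreducible cubic whose dual has one or two components.
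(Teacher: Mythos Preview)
Your scheme has a genuine gap in the double-counting step. You apply Lemma~\ref{lem:CPInter} once per arc and then sum, but a line of $\mathcal{T}$ tangent to $A_j$ will, in general, cross the interior of $\mathcal{P}_i$ for $i\ne j$ and so be absorbed into $\mathcal{G}_i$ without contributing to $|\mathcal{N}|$. Take the smooth one-component case ($r=3$, $k_0=0$): each tangent line is excluded from exactly one $\mathcal{G}_i$, hence $\sum_i|\mathcal{G}_i|\le 3|\mathcal{N}|+2k$, while your lower bound is $\sum_i|\mathcal{G}_i|\ge \sum_i(k_i-1)=k-3$. Combining gives only $3|\mathcal{N}|\ge -k-3$, which is vacuous. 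No ``careful enumeration over the five types'' repairs this, because the loss comes from tangent lines piercing each other's polygons, not from the particular topology of $\Gamma$. There is also a secondary issue with your construction of $\mathcal{P}_i$: for the tangents to $A_i$ to meet \emph{outside} $\mathcal{P}_i$ you need the polygon on the side of $A_i$ opposite to where those tangents intersect, which is not what ``$\mathcal{P}_i$ containing $A_i$'' suggests.

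The paper avoids the double-counting trap by applying Lemma~\ref{lem:CPInter} to a \emph{single} polygon $\mathcal{P}$, chosen so that its sides are themselves tangent lines of $\Gamma$. Concretely, one selects the arc $AB$ of the curvilinear triangle with the most tangents ($k_{AB}\ge k_\Delta/3$), takes the digon bounded by the two extreme tangents $L_1,L_{k_{AB}}$ that avoids the third cusp, and lets $\mathcal{P}$ be the face of the subarrangement of tangents to the \emph{other} two arcs lying in that digon. Then every tangent line either bounds $\mathcal{P}$ or belongs to $\mathcal{F}=\{L_2,\dots,L_{k_{AB}-1}\}$, so $\mathcal{G}$ contains only non-tangent lines and Lemma~\ref{lem:CPInter} gives directly $|\mathcal{N}|\ge k_{AB}-3\ge \tfrac{1}{3}k_\Delta-3$. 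In the one-component (and cuspidal, acnodal) cases this already yields $n\ge\tfrac{4}{3}k-3$. When $\Gamma$ has an oval, one needs a second inequality, obtained by observing that the $k_O$ tangents to the oval cut out a convex $k_O$-gon that must be triangulated by the remaining lines, giving $|\mathcal{N}|+k_\Delta\ge k_O-3$; the weighted combination $6\cdot(\text{first})+1\cdot(\text{second})$ then produces $7|\mathcal{N}|\ge k-21$.
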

We stress that in Proposition~\ref{prop:irreducible} and below a line of $\mathcal{A}$ is considered to be tangent to $\Gamma$ only if its dual point belongs to $\hat \Gamma$. We will split the proof of the proposition into five cases depending on the shape of the irreducible cubic $\hat\Gamma$ (see Section~\ref{sub:classification}).

\begin{proof}[Proof for a smooth irreducible cubic with one connected component]
In this case, we prove that $n \geq \frac{4}{3}k-3$. Let $A,B,C$ be the three cusps of $\Gamma$. We also define $\Delta$ as the curvilinear triangle cut out by $\Gamma$ (the component of $\mathbb{RP}^{2} \setminus \Gamma$ that has corner points of zero angle). Let $k_{AB}$ (resp. $k_{BC}$ and $k_{AC}$) be the number of lines of $\mathcal{A}$ that are tangent to the arc $AB$ of $\Gamma$ (resp. arcs $BC$ and $AC$), including lines tangent to $\Gamma$ at cusps $A$ and $B$ (resp. $B,C$ and $A,C$). Assuming without loss of generality that $k_{AB} \geq k_{BC} \geq k_{AC}$, we obtain that $k_{AB} \geq \frac{k}{3}$. We denote by $g$ the number of lines of $\mathcal{A}$ that are not tangent to $\Gamma$. If $k \leq 3$, then the linear bound $n \geq \frac{4}{3}k-3$ is trivially satisfied so we assume $k \geq 4$. It follows that $k_{AB} \geq 2$.
\begin{figure}[h!]
\includegraphics[scale=0.16]{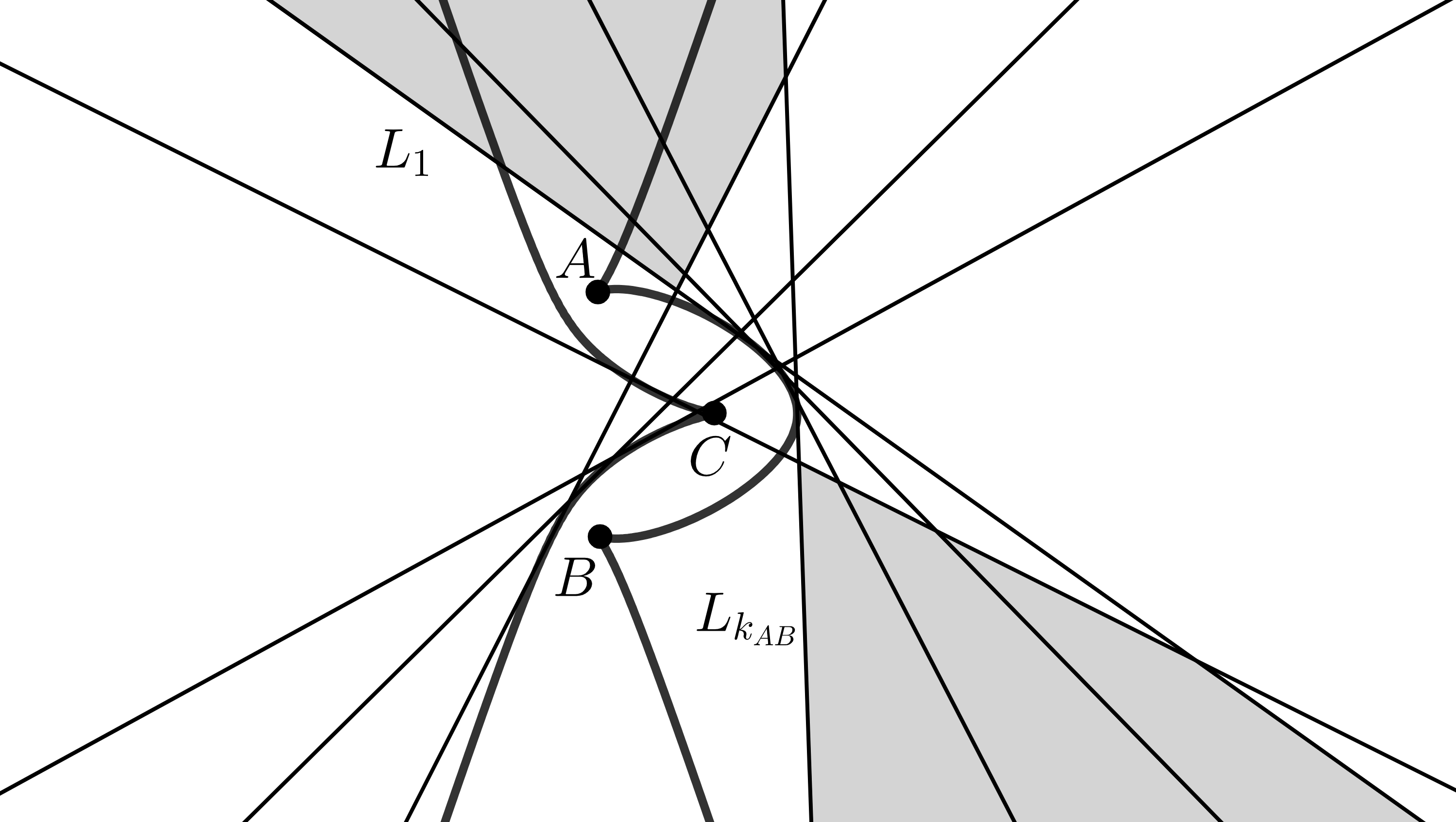}
\caption{The connected dual cubic and the lines of $\mathcal A$ tangent to it}
\label{fig:dualines}
\end{figure}
\par
Denote by $L_{1},\dots,L_{k_{AB}}$ the lines of $\mathcal{A}$ that are tangent to arc $AB$ according to the position of their tangency point in arc $AB$ (the tangency point of $L_{1}$ is the closest to $A$ while the tangency point of $L_{k_{AB}}$ is the closest to $B$). None of these lines contains the third cusp $C$. Indeed, if such a line $L$ contains $C$, the dual point $\hat{L}\in \hat\Gamma$ lies on the  line $\hat{C}$, tangent to $\hat{\Gamma}$ at the inflection point of $\hat\Gamma$ corresponding to cusp $C$. Hence $\hat C$ has intersection of multiplicity at least $4=3+1$ with $\hat \Gamma$, which contradicts B\'{e}zout's theorem. So lines $L_{1},\dots,L_{k_{AB}}$ do not contain $C$.
\par
Let $\mathcal{D}$ be the digon cut out in $\mathbb{RP}^{2}$ by $L_{1}$ and $L_{k_{AB}}$ that does not contain the cusp $C$. Denote by $\mathcal{B}$ the subarrangement of $\mathcal{A}$ formed by the lines that are tangent to arcs $BC$ and $AC$. Lines of $\mathcal{B}$ cut out a unique convex polygon $\mathcal{P}$ inside $\mathcal{D}$ that contains $\mathcal{D} \setminus \Delta$. $\mathcal{P}$ is shaded gray on Figure~\ref{fig:dualines}. By construction of $\mathcal P$, the lines of $\mathcal A$ tangent to $\Gamma$ that intersect the interior of $\mathcal{P}$ are precisely $L_{2},\dots,L_{k_{AB}-1}$. Since the arc $AB$ is convex, these $k_{AB}-2$ lines intersect among themselves outside $\mathcal{P}$. So Lemma~\ref{lem:CPInter} implies  $g \geq k_{AB}-3$.
\par
Since $n=k+g$ and $k_{AB} \geq \frac{k}{3}$, we obtain that $n \geq \frac{4}{3}k-3$.
\end{proof}

\begin{proof}[Proof for the cuspidal cubic]
We follow exactly the same reasoning as in the case where $\hat{\Gamma}$ is smooth and connected. This time $\Gamma$ is the union of the cuspidal cubic $y^2=x^3$ with the line at infinity. The curvilinear triangle $\Delta$ is formed by the union of the cuspidal cubic and the segment $x\ge 0$ on the line at infinity. The only difference with the first case is that the straight side of $\Delta$ is tangent to at most one line of $\mathcal A$ - the one containing it.
\end{proof}

\begin{proof}[Proof for a smooth irreducible cubic with two connected components]
In this case, we prove that  $n \geq \frac{8}{7}k -3$. We denote by $k_{\Delta}$ the number of lines of $\mathcal{A}$ that are tangent to the component with the three cusps while $k_{O}$ is the number of lines of $\mathcal{A}$ tangent to the oval. As before, $g$ is the number of lines in $\mathcal A$ not tangent to $\Gamma$.
\par
First, we prove that
\begin{equation}\label{eq:polygon}
g+k_{\Delta} \geq k_{O}-3.
\end{equation}
Let $\mathcal{O}$ be the subarrangement of $\mathcal{A}$ formed by the $k_{O}$ lines tangent to the oval of $\Gamma$. Among the faces cut out by the lines of $\mathcal{O}$, we denote by $\mathcal{R}$ the face containing $\Gamma$. If $k_{O} \geq 3$, $\mathcal{R}$ is a convex polygon with $k_{O}$ sides, and therefore has to be crossed by at least $k_{O}-3$ other lines of $\mathcal{A}$ to be triangulated. Inequality~\eqref{eq:polygon} follows. 
\par
Now, we settle the case $k_{\Delta} \leq 3$. Inequality~\eqref{eq:polygon} implies that $g \geq k_{O}-6$. Since $k=k_{O}+k_{\Delta}$, we obtain $g \geq k-9$ and therefore $n \geq 2k-9$ which is stronger than $n \geq \frac{8}{7}k -3$ provided that $k \geq 7$. If $k \leq 6$, inequality $n \geq \frac{8}{7}k -3$ automatically holds.
\par
\begin{figure}[h!]
\includegraphics[scale=0.16]{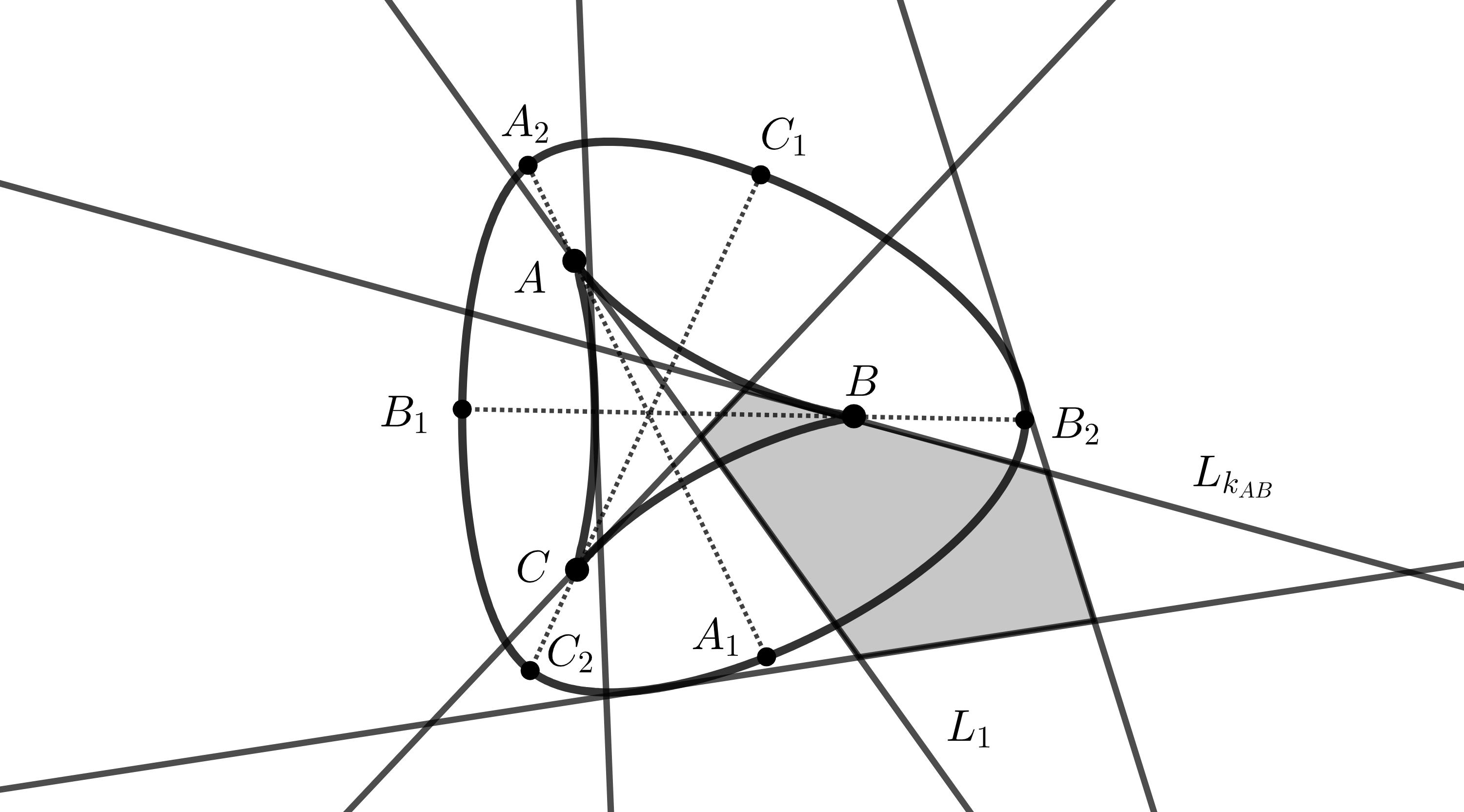}
\caption{The disconnected dual cubic and the lines of $\mathcal A$ tangent to it}
\label{fig:disconnectedlines}
\end{figure}

In the rest of the proof, we will assume that $k_{\Delta} \geq 4$. Following the same notations as in the case of a smooth irreducible cubic with one connected component, $k_{AB}$ denotes the number of lines of $\mathcal{A}$ that are tangent to the arc of the curvilinear triangle drawn between two cusps $A$ and $B$. Without loss of generality, we can assume that $k_{AB} \geq \frac{1}{3}k_{\Delta}$ and therefore $k_{AB} \geq 2$. Again, we denote by $\mathcal{D}$ the digon cut out by $L_{1}$ and $L_{k_{AB}}$ that does not contain the cusp $C$.
\par
Denote by $A_{1},A_{2}$ (resp. $B_{1},B_{2}$ and $C_{1},C_{2}$) the intersection of the oval of $\Gamma$ with the tangent line to $\Gamma$ at the cusp $A$ (resp. $B$ and $C$). Then, the oval decomposes into six arcs $(A_{1}C_{2}),(C_{2},B_{1}),(B_{1},A_{2}),(A_{2},C_{1}),(C_{1},B_{2}),(B_{2},A_{1})$. Observe that any line tangent to the arc of $\Delta$ with endpoint $A$ and $B$ intersects the oval two times in the arcs $(B_{1},A_{2})$ and $(B_{2},A_{1})$. 
\par
Let $\Theta$ be the intersection of the arc $(B_{2},A_{1})$ with the digon $\mathcal{D}$. Observe that the only lines of $\mathcal{A}$ that intersect $\Theta$ are $L_{2},\dots,L_{k_{AB}-1}$. Introduce the subarrangement $\mathcal{B} = \mathcal{A} \setminus \lbrace{ L_{2},\dots,L_{k_{AB}-1} \rbrace}$. Then, among the convex polygons cut out by the lines of $\mathcal{B}$, we define $\mathcal{P}$ as the convex polygon containing the arc $\Theta$. In particular, $\mathcal{P}$ is contained in $\mathcal{R}$. Polygon $\mathcal{P}$ is shaded in grey on Figure~\ref{fig:disconnectedlines}.
\par
Lines $L_{2},\dots,L_{k_{AB}-1}$ intersect the interior of $\Theta$ and therefore the interior of $\mathcal{P}$. Besides, as they intersect outside of $\mathcal{D}$, they intersect outside of $\mathcal{P}$. Lemma~\ref{lem:CPInter} proves that $g \geq k_{AB}-3$ and thus
\begin{equation}\label{eq:seconde}
g \geq \frac{1}{3}k_{\Delta}-3.    
\end{equation}
Adding \eqref{eq:polygon} to \eqref{eq:seconde} multiplied by $6$, we get $7g \geq k_{\Delta} + k_{O} -21=k-21$. Using  $g=n-k$, we finally get $n \geq \frac{8}{7}k -3$.
\end{proof}

\begin{proof}[Proof for an acnodal irreducible cubic]
The proof and the inequality are the same as in the case of a smooth irreducible cubic with two connected components. We just have to add the constraint $k_{O} \leq 1$.
\end{proof}

\begin{proof}[Proof for a nodal irreducible cubic]
Let $A,B,C$ be respectively the two tangent points of the unique bitangent line to $\Gamma$ and its unique cusp. We denote by $k_{O}$ the number of lines that are tangent to the arc of $\Gamma$ between $A$ and $B$ (including possibly the bitangent line $AB$). Then, we denote by $k_{\Delta}$ the number of other lines of $\mathcal{A}$ that are tangent to $\Gamma$ (in the two arcs between $A,B$ respectively and the cusp $C$). We can now carry out the same reasoning as in the case of a smooth irreducible cubic with two connected components (with the same notations). We obtain the same inequality.
\end{proof}

\section{Proof of the main theorem}\label{sec:rigidity}

\subsection{Geometry of regular simplicial arrangements}\label{sub:trigo}

A regular simplicial arrangement is the union of two subarrangements defined with respect to a circle $\gamma$ which is, for our purpose, the unit circle of the complex plane:
\begin{itemize}
    \item $\mathcal{A}_{\gamma}$ formed by the lines $L_{1},\dots,L_{m}$ that are tangent to $\gamma$ at the $m^{th}$ roots of the unit $e^{\frac{2ij\pi}{m}}$ where $j \in \mathbb{Z}/m\mathbb{Z}$;
    \item a pencil $\mathcal{A}_{C}$ formed by the $m$ lines containing the centre of circle $\gamma$ and a $2m^{th}$ root of the unit $e^{\frac{ij\pi}{m}}$ where $j \in \mathbb{Z}/m\mathbb{Z}$;. 
\end{itemize}

We introduce the notation $Z_{j,l}$ for the complex coordinate of vertex $L_{j} \cap L_{l}$. A direct trigonometric computation proves that $|Z_{j,l}|=\frac{1}{\cos(|j-l|\pi/m)}$ while $\arg(Z_{j,l})=\frac{(j+l)\pi}{m}$.
\par
In particular, with the possible exception of the line at infinity that contains $\frac{m}{2}$ vertices if $m$ is even, the vertices of $\mathcal{A}_{\gamma}$ are contained in concentric circles of radii $\frac{1}{\cos(|j-l|\pi/m)}$ containing $m$ vertices each. 
\par
The $m$ lines of line arrangement $\mathcal{A}_{\gamma}$ cut out (see Lemma~\ref{lem:centralpolygon}): 
\begin{itemize}
\item a $m$-gon $\mathcal{P}$ containing circle $\gamma$ and called the \textit{central polygon};
\item $m$ triangles;
\item $\frac{m(m-3)}{2}$ quadrilaterals.
\end{itemize}
The vertices of the central polygon $\mathcal{P}$ belong to the circle of radius $\frac{1}{\cos(\pi/m)}$. The $m$ triangles are adjacent to $\mathcal{P}$ and share two vertices with it. The third vertex of each triangle belongs to the circle of radius $\frac{1}{\cos(2\pi/m)}$.
\par
We prove now that some triples of vertices of $\mathcal{A}_{\gamma}$ cannot belong to a same line.

\begin{lem}\label{lem:alignement}
In the line arrangement $\lbrace{ L_{1},\dots, L_{m} \rbrace}$ defined by the sides of a regular $m$-gon with $m \geq 3$, for any $j,l \in \mathbb{Z}/m\mathbb{Z}$ such that $|l-j| \geq 4$, the three points $L_{j} \cap L_{l}$, $L_{j+1} \cap L_{l-2}$ and $L_{j-1} \cap L_{l+2}$ cannot be on a same line. Similarly, $L_{j} \cap L_{l}$, $L_{j+2} \cap L_{l-1}$ and $L_{j-2} \cap L_{l+1}$ cannot be on a same line either.
\end{lem}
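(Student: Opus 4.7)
The plan is to reduce collinearity of the three vertices to the vanishing of an explicit trigonometric expression, then observe that this expression cannot vanish for $m \geq 3$.

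First, I would package the coordinates of each vertex $Z_{a,b}$ into homogeneous form. From $|Z_{a,b}| = 1/\cos((b-a)\pi/m)$ and $\arg(Z_{a,b}) = (a+b)\pi/m$, clearing the denominator gives
$$P_{a,b} = \bigl[\cos\tfrac{(a+b)\pi}{m} \,:\, \sin\tfrac{(a+b)\pi}{m} \,:\, \cos\tfrac{(b-a)\pi}{m}\bigr] \in \mathbb{RP}^2,$$
which is projectively well-defined and gracefully accommodates the case of a vertex at infinity. Since collinearity is preserved by any projective transformation, I would rotate the picture by $-(j+l)\pi/m$, which replaces the sum-of-indices parameter by $0$. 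Writing $u := (l-j)\pi/m$, the three points of the first triple become
\begin{align*}
P_1 &= (1,\, 0,\, \cos u), \\
P_2 &= (\cos(\pi/m),\, -\sin(\pi/m),\, \cos(u - 3\pi/m)), \\
P_3 &= (\cos(\pi/m),\, \sin(\pi/m),\, \cos(u + 3\pi/m)).
\end{align*}

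Second, I would expand the $3\times 3$ determinant with rows $P_1, P_2, P_3$ along the first row and simplify using the sum-to-product identity $\cos(u - 3\pi/m) + \cos(u + 3\pi/m) = 2\cos u\cos(3\pi/m)$ together with $\sin(2\pi/m) = 2\sin(\pi/m)\cos(\pi/m)$. This collapses the determinant to $2\sin(\pi/m)\cos u\,[\cos(\pi/m) - \cos(3\pi/m)]$, which becomes $4\sin^2(\pi/m)\sin(2\pi/m)\cos u$ after one further application of $\cos(\pi/m) - \cos(3\pi/m) = 2\sin(2\pi/m)\sin(\pi/m)$. For $m \geq 3$ and for $|l-j|$ chosen so the vertex $L_j\cap L_l$ is a genuine finite vertex (equivalently $\cos u \neq 0$), all three factors are nonzero, so the determinant does not vanish and the three points are not collinear.

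Third, for the second triple $L_j\cap L_l$, $L_{j+2}\cap L_{l-1}$, $L_{j-2}\cap L_{l+1}$, I observe that the multiset of sum-of-indices parameters is again $\{j+l,\,j+l\pm 1\}$ and the multiset of difference-of-indices parameters is again $\{l-j,\,l-j\pm 3\}$; only the pairing between the two is permuted (the $\pm 3$ shift is attached to the opposite $\pm 1$ shift). The normalized coordinates differ from those of the first triple only by swapping $P_2$ and $P_3$, so the determinant is the same up to sign and the same conclusion applies.

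The main obstacle, modest in scope, is the trigonometric bookkeeping: getting the signs in the determinant expansion right and chaining together the product-to-sum identities. The only genuinely delicate point is the boundary case $|l-j|\equiv m/2\pmod m$ (so $\cos u = 0$), when $L_j$ and $L_l$ are parallel and their intersection lies at infinity; the hypothesis $|l-j|\geq 4$ should be read together with the implicit convention $|l-j|<m/2$ so that all three intersections are honest vertices of $\mathcal{A}_\gamma$, and in that regime the determinant is strictly nonzero.
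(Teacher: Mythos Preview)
Your determinant computation is correct and, although packaged differently, reduces to exactly the same trigonometric obstruction as the paper's argument. The paper proceeds via the angle-bisector length formula in the triangle $AOC$: if $A,B,C$ were collinear with $O$ the centre and half-angle $\theta=\pi/m$ at $O$, then $|OB|=\frac{2\cos\theta\,|OA|\,|OC|}{|OA|+|OC|}$; substituting the radii $1/\cos((k\mp 3)\theta)$ and $1/\cos(k\theta)$ forces $\cos((k-3)\theta)+\cos((k+3)\theta)=2\cos\theta\cos(k\theta)$, whereas the true sum-to-product identity gives $2\cos 3\theta\cos(k\theta)$. Your factor $\cos(\pi/m)-\cos(3\pi/m)$ is precisely this discrepancy, and your factor $\cos u$ is the paper's $\cos(k\theta)$. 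So the two proofs are the same computation in different clothing; your homogeneous-coordinate version has the advantage that points at infinity need no separate case analysis, and your symmetry argument for the second triple is cleaner than the paper's appeal to a ``symmetry argument'' left to the reader.

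On the boundary case $\cos u=0$ (i.e.\ $|l-j|=m/2$, $m$ even) your instinct is right and in fact sharper than the paper's treatment. The paper asserts that when $B=L_0\cap L_{m/2}$ is the vertical point at infinity the line through $A=L_1\cap L_{m/2-2}$ and $C=L_{-1}\cap L_{m/2+2}$ is not vertical; but this is false. The reflection $L_k\mapsto L_{-k}$ of the regular $m$-gon through the horizontal axis interchanges $A$ and $C$, so they share the same abscissa and $AC$ \emph{is} vertical---for $m=8$ one computes $A=(\sqrt{2}-1,\,1)$ and $C=(\sqrt{2}-1,\,-1)$ directly. Your vanishing determinant detects this collinearity honestly. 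Hence the lemma as literally stated fails at $|l-j|=m/2$, and your proposed reading $4\le |l-j|<m/2$ is the correct amendment; the application in the subsequent corollary would then need one extra clause ruling out $N_3$ at infinity.
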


\begin{proof}
Unless $m \geq 8$, the statement is empty. We first prove that statement for the three points $L_{j} \cap L_{l}$, $L_{j+1} \cap L_{l-2}$ and $L_{j-1} \cap L_{l+2}$ and will denote them respectively by $B,A,C$. The statement for triples of the form $L_{j} \cap L_{l}$, $L_{j+2} \cap L_{l-1}$ and $L_{j-2} \cap L_{l+1}$ is deduced from the previous one by a symmetry argument.
\par
Suppose $A,B,C$ belong to the same line and none of them is at infinity. Then considering the triangle $AOC$, we get the following expression for the length of bisector $OB$
$$OB=\frac{2 \cos(\theta) \cdot OA \cdot OC}{OA + OC}$$
where $\theta = \frac{\pi}{m}$ and $O$ is the center $O$ of the regular polygon $\mathcal{P}$.
\par
In our case, we have $\theta = \frac{\pi}{m}$, $OA = \frac{1}{\cos((k-3)\theta)}$, $OB=\frac{1}{\cos(k\theta)}$ and $OC=\frac{1}{\cos((k+3)\theta)}$ where $k =|j-l|$. We obtain
$$
\cos((k-3)\theta) + \cos((k+3)\theta) = 2\cos (\theta) \cdot \cos(k\theta).
$$
while the correct trigonometric identity is
$$
\cos((k-3)\theta) + \cos((k+3)\theta) = 2\cos (3\theta) \cdot \cos(k\theta).
$$
By hypothesis, $B$ is not a point at infinity so $\cos(k\theta) \neq 0$. Besides, $m \geq 8$ so $\cos(\theta)<\cos(3\theta)$. The two trigonometric identities cannot hold simultaneously so $A$, $B$ and $C$ cannot be colinear. When $C$ (or equivalently $A$) is a point at infinity, the same trigonometric identity holds and we get a contradiction in the same way.
\par
In the last case, $B$ is a point at infinity. This corresponds to the case where $B=L_{0} \cap L_{m/2}$ (up to a rotation). $B$ is the point at infinity in the vertical direction while $A$ and $C$ are respectively $L_{1} \cap L_{(m-4)/2}$ and $L_{-1} \cap L_{(m+4)/2}$. We check immediately that the slope of the line between these two points is not vertical.
\end{proof}

We deduce from Lemma~\ref{lem:alignement} that some configurations of colinear points cannot exist in a regular simplicial arrangement.

\begin{prop}\label{prop:configuration}
Consider the simplicial arrangement $\mathcal{A}_{r}$ formed by the sides and symmetric axes of a regular $m$-gon $\mathcal{P}$ for $m \geq 3$ ($\mathcal{A}_{r}$ belongs to the combinatorial class $\mathcal{A}(2m,1)$). If $\Lambda$ is a line such that:
\begin{itemize}
    \item $\Lambda \notin \mathcal{A}_{r}$;
    \item $\Lambda$ does not contain the centre $C$ of the regular $m$-gon $\mathcal{P}$;
    \item $\Lambda$ is not the line at infinity (for the Euclidean structure where $\mathcal{P}$ is a regular polygon);
\end{itemize}
then $\Lambda$ does not contain a configuration of five consecutive intersection points $N_{1},N_{2},N_{3},N_{4},N_{5}$ with $\mathcal{A}_{r}$ such that:
\begin{itemize}
    \item $N_{1},N_{3},N_{5}$ are triple points of $\mathcal{A}_{r}$;
    \item $N_{2},N_{4}$ are interior points of edges of $\mathcal{A}_{r}$.
\end{itemize}
\end{prop}

\begin{proof}
We assume by contradiction that $\Lambda$ contains such a configuration $N_{1},N_{2},N_{3},N_{4},N_{5}$ of consecutive intersection points with $\mathcal{A}_{r}$. We decompose $\mathcal{A}_{r}$ into two subarrangements $\mathcal{A}_{\gamma}$ and $\mathcal{A}_{C}$ where $\mathcal{A}_{\gamma}$ is formed by the sides of $\mathcal{P}$ while $\mathcal{A}_{C}$ is the pencil formed by the lines of $\mathcal{A}_{r}$ that contains the centre $C$ (these lines are also the symmetry axes of $\mathcal{P}$).
\par
Since $N_{1},N_{3},N_{5}$ are triple points in $\mathcal{A}_{r}$, they are vertices of the subarrangement $\mathcal{A}_{\gamma} \subset \mathcal{A}_{r}$. The two lines of $\mathcal{A}_{\gamma}$ intersecting in $N_{3}$ cut out four angular sectors, two of which corresponding to lines incident to $N_{3}$ intersecting the interior of polygon $\mathcal{P}$. We will refer to them as radial sectors (the two other angular sectors will be referred to as the nonradial sectors).
\par
We first assume that $\Lambda$ intersects the nonradial sectors of $N_{3}$. If these sectors belong to triangles of arrangement $\mathcal{A}_{\gamma}$ (see Lemma~\ref{lem:centralpolygon}), then $N_{2}$ lies on a symmetry axis of the triangle while $N_{1}$ lies on a boundary edge of the triangle and therefore cannot be a triple point of $\mathcal{A}_{r}$. If these nonradial sectors belong to quadrilaterals of arrangement $\mathcal{A}_{\gamma}$, then $N_{2}$ is contained in an edge of $\mathcal{A}_{r}$ contained in a line of $\mathcal{A}_{C}$ and $N_{1}$ belongs to the same concentric circle as $N_{3}$ (we have |$CN_{1}|=|CN_{3}|$). Then $\Lambda$ belongs to a non-radial sector of $N_{3}$ and $N_{4}$ plays a similar role for $N_{3},N_{5}$ as $N_{2}$ for $N_{1},N_{3}$. It follows that vertices $N_{1},N_{3},N_{5}$ belong to a same circle centered on $C$. Therefore, the only case where $N_{1},N_{3},N_{5}$ are colinear is when they are points at infinity for this affine chart. This case is ruled out by hypothesis.
\par
In the remaining case, $\Lambda$ intersects the radial sectors of $N_{3}$ (it follows from the previous arguments that $\Lambda$ also intersects the radial sectors of $N_{1}$ and $N_{5}$). We check that if a radial sector of $N_{3}$ intersected by $\Lambda$ belongs to a triangle of arrangement $\mathcal{A}_{\gamma}$, then $N_{2}$ (or $N_{4}$ depending on the orientation we chose) belongs to the edge of this triangle that is incident to polygon $\mathcal{P}$ and $N_{1}$ (or $N_{5}$) cannot be a vertex of $\mathcal{A}_{\gamma}$. The same problem appears if the radial sector of $N_{3}$ containing $\Lambda$ belongs to polygon $\mathcal{P}$. Hence the radial sectors of $N_{3}$ intersected by $\Lambda$ are contained in quadrilaterals of arrangement $\mathcal{A}_{\gamma}$. The arguments given previously show similarly that the four edges $[N_{1},N_{2}]$, $[N_{2},N_{3}]$, $[N_{3},N_{4}]$ and $[N_{4},N_{5}]$ are contained in quadrilaterals of $\mathcal{A}_{\gamma}$. 
\par
Assuming that $N_{3}$ is the intersection point of $L_{j}$ and $L_{l}$ (where $L_{1},\dots,L_{m}$ are the lines of $\mathcal{A}_{\gamma}$ endowed with the cyclic order induced by $\mathcal{P}$), then one of the two following statements holds:
\begin{itemize}
    \item $N_{1}$ and $N_{5}$ are $L_{j+1} \cap L_{l-2}$ and $L_{j-1} \cap L_{l+2}$;
    \item $N_{1}$ and $N_{5}$ are $L_{j+2} \cap L_{l-1}$ and $L_{j-2} \cap L_{l+1}$;
\end{itemize}
where $|j-l| \geq 4$ (otherwise, $[N_{1},N_{5}]$ would intersect the interior of a triangle of $\mathcal{A}_{r}$ or the interior of $\mathcal{P}$). Lemma~\ref{lem:alignement} rules out these cases. We conclude that there is no such configuration $N_{1},N_{2},N_{3},N_{4},N_{5}$.
\end{proof}

\subsection{Arrangements differing from a regular arrangement by a small proportion of lines}\label{sub:difference}

Drawing on the description of regular simplicial arrangements given in Section~\ref{sub:trigo}, we prove in this section that a simplicial arrangement differing by a small proportion of lines from a regular simplicial arrangement automatically belongs to family $\mathcal{R}(1)$ or $\mathcal{R}(2)$.

\begin{prop}\label{prop:rigidity}
Consider a simplicial arrangement $\mathcal{A}$ of $n \geq 15$ lines in $\mathbb{RP}^{2}$ that differs by at most $k < \frac{1}{13}n - \frac{14}{13}$ lines from a regular simplicial arrangement $\mathcal{A}_{r}$ formed by the sides and symmetry axes of a regular $m$-gon $\mathcal{P}$ ($\mathcal{A}_{r}$ belongs to the combinatorial class $\mathcal{A}(2m,1)$). Then $\mathcal{A}$ coincides either with $\mathcal{A}_{r}$ or with the union of $\mathcal{A}_{r}$ with the line at infinity.
\end{prop}

First, we give a lemma ruling out additional lines containing the center of the regular arrangement.

\begin{lem}\label{lem:rigidity}
Consider a simplicial arrangement $\mathcal{A}$ of $n \geq 15$ lines in $\mathbb{RP}^{2}$ that differs by at most $k < \frac{1}{7}n + \frac{1}{7}$ lines from a regular simplicial arrangement $\mathcal{A}_{r}$ formed by the sides and symmetry axes of a regular $m$-gon $\mathcal{P}$ ($\mathcal{A}_{r}$ belongs to the combinatorial class $\mathcal{A}(2m,1)$). Then no line $\Lambda$ of $\mathcal{A} \setminus \mathcal{A}_{r}$ contains the centre $C$ of the regular polygon $\mathcal{P}$. Besides, $\mathcal{A}$ cannot be a near-pencil.
\end{lem}

\begin{proof}
By our assumptions, $\frac{n-k}{2} \leq m \leq \frac{n+k}{2}$. Also arrangement $\mathcal{A}$ contains at least $m-k$ sides of polygon $\mathcal{P}$. Since $m-k \geq \frac{n-3k}{2} > \frac{2}{7}n - \frac{3}{14} \geq 4$ (because $k < \frac{1}{7}n+\frac{1}{7}$ and $n \geq 15$), there is a polygon with at least four sides formed by the union of several faces of $\mathcal{A}$. Hence it cannot belong to family $\mathcal{R}(0)$.
\par
Assume by contradiction there exists a line $\Lambda \in \mathcal{A} \setminus \mathcal{A}_{r}$ containing the centre $C$ of the regular polygon $\mathcal{P}$. Let $M_{1},\dots,M_{t}$ be a cyclically ordered set of points on $\Lambda$ where $M_{1}=C$ while $M_{2},\dots,M_{t}$ are the intersection points (other than $C$) of $\Lambda$ with the other lines of the symmetric difference $\mathcal{A} \Delta \mathcal{A}_{r}$.  We have $t \leq k$ since $|\mathcal{A} \Delta \mathcal{A}_{r}| \leq k$ and $\Lambda \in \mathcal{A} \Delta \mathcal{A}_{r}$. A line containing $C$ cannot contain any other vertex of $\mathcal{A}_{r}$ without being itself a line of $\mathcal{A}_{r}$. Therefore, $\Lambda$ crosses at most one line of $\mathcal{A}_{r}$ at each of points of $M_{2},\dots,M_{t}$. To sum up, $\Lambda$ intersects $m$ lines of $\mathcal{A}_{r}$ in $C=M_{1}$, at most $t-1$ lines of $\mathcal{A}_{r}$ in $M_{2},\dots,M_{t}$ and $k-1$ lines of $\mathcal{A} \Delta \mathcal{A}_{r}$ in $M_{2},\dots,M_{t}$. It follows that $\Lambda$ intersects at most $m+t+k-2$ lines of $\mathcal{A}$ in $M_{1},\dots,M_{t}$. 
\par
By hypothesis, inside any open interval  $]M_{i},M_{i+1}[$, line $\Lambda$ only intersects lines of $\mathcal{A}_{r}$. Besides, it cannot contain any vertex of $\mathcal{A}_{r}$ (because $C=M_{1}$ and $\Lambda$ cannot contain any other vertex of $\mathcal{A}_{r}$ without being a line of $\mathcal{A}_{r}$). It follows that every vertex of $]M_{i},M_{i+1}[$ is a double point of $\mathcal{A}$. We know from Lemma~\ref{lem:adjaDP} that two double points cannot be adjacent in a simplicial arrangement that does not belong to family $\mathcal{R}(0)$. It follows that in any open interval $]M_{i},M_{i+1}[$, $\Lambda$ intersects at most one line of $\mathcal{A}$.
\par
We conclude that line $\Lambda$ intersects at most $m+t+k-2$ lines of $\mathcal{A}$ in $M_{1},\dots,M_{t}$ and at most $t$ lines of $\mathcal{A}$ outside $M_{1},\dots,M_{t}$. Thus, we have $n-1 \leq m+2t+k-2 \leq \frac{1}{2}n+\frac{7}{2}k-2$ (because $t \leq k$ and $m \leq \frac{n+k}{2}$). It follows that $\frac{1}{7}n + \frac{1}{7} \leq k$. This contradicts our assumptions, and so there is no $\Lambda \in \mathcal{A} \setminus \mathcal{A}_{r}$ containing $C$.
\end{proof}

A variant of the above argument gives further restrictions on lines added to  is a regular arrangement. We will see that the only line that can be added to $\mathcal{A}_{r}$  is the line at infinity.

\begin{lem}\label{lem:rigidity1}
Consider a simplicial arrangement $\mathcal{A}$ of $n \geq 15$ lines in $\mathbb{RP}^{2}$ that differs by at most $k < \frac{1}{13}n - \frac{14}{13}$ lines from a regular simplicial arrangement $\mathcal{A}_{r}$ formed by the sides and symmetric axes of a regular $m$-gon $\mathcal{P}$ ($\mathcal{A}_{r}$ belongs to the combinatorial class $\mathcal{A}(2m,1)$). Then $\mathcal{A} \setminus \mathcal{A}_{r}$ contains at most one line which is the line at infinity (for an Euclidean structure where regular polygon $\mathcal{P}$ is inscribed in a circle).
\end{lem}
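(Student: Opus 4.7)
The plan is to show that every line $\Lambda$ in $\mathcal{A}\setminus\mathcal{A}_{r}$ must be the line at infinity; since there is a unique line at infinity in $\mathbb{RP}^{2}$, this immediately implies the lemma.

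Fix such a $\Lambda$. By Lemma~\ref{lem:rigidity}, $C\notin\Lambda$. Assume for contradiction that $\Lambda$ is not the line at infinity. Then $\Lambda\notin\mathcal{A}_{r}$, $C\notin\Lambda$, and $\Lambda$ is not the line at infinity, so Corollary~\ref{cor:configuration} applies: no five consecutive intersection points of $\Lambda$ with $\mathcal{A}_{r}$ exhibit the pattern $T,I,T,I,T$, where $T$ denotes a triple point of $\mathcal{A}_{r}$ and $I$ the interior of an edge of $\mathcal{A}_{r}$.

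The counting proceeds in the spirit of Lemma~\ref{lem:rigidity}, but with the added complication that $\Lambda$ may pass through several $\mathcal{A}_{r}$-vertices of orders $2$ and $3$. Let $t\leq k-1$ denote the number of exotic points $M_{1},\dots,M_{t}$ on $\Lambda$ (those where $\Lambda$ meets a line of $(\mathcal{A}\setminus\mathcal{A}_{r})\setminus\{\Lambda\}$), and split $\Lambda$ into the $t$ corresponding open arcs. Since $C\notin\Lambda$, every $\mathcal{A}_{r}$-intersection on $\Lambda$ has $\mathcal{A}_{r}$-order at most $3$; write $\tau,\delta,\iota$ for the numbers of such intersections of orders $3,2,1$. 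Counting $\mathcal{A}_{r}$-line incidences gives $3\tau+2\delta+\iota=2m$, while the $m$ symmetry axes (all through $C$) meet $\Lambda$ in $m$ distinct points, so $\tau+\delta+\iota\geq m$ and hence $2\tau+\delta\leq m$. Inside each open arc, all intersections are with $\mathcal{A}_{r}$-lines; Lemma~\ref{lem:adjaDP} (applicable because $\mathcal{A}\notin\mathcal{R}(0)$, already noted in the proof of Lemma~\ref{lem:rigidity}) forces the pure-regular $I$-intersections — precisely the $\mathcal{A}$-doubles — to be separated along $\Lambda$ by non-doubles, i.e.\ by $V_{2}$- or $V_{3}$-intersections or by exotic points. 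This yields $\iota\leq\tau+\delta+O(k)$, and, combined with the identity $3\tau+2\delta+\iota=2m$, an inequality of shape $3\delta+4\tau\geq 2m-O(k)$.

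To close the argument with the coefficient $13$ required by the hypothesis, I next invoke Corollary~\ref{cor:configuration}. The ban on $TITIT$ in five consecutive $\mathcal{A}_{r}$-intersections forbids the alternation of triples and interiors from persisting over three consecutive triples separated by single interiors; thus a $V_{2}$-intersection (or a longer break produced by an exotic or missing point) must appear between every two ``tight'' triples along $\Lambda$. Quantifying this provides a second inequality forcing $\delta$ to grow linearly with $\tau$, which, combined with the previous bound and $2\tau+\delta\leq m$, upgrades the estimate to $2m\leq 13k+O(1)$. Using $m\geq (n-k)/2$, this becomes $n\leq 13k+14$, contradicting $k<\tfrac{1}{13}n-\tfrac{14}{13}$. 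The main obstacle is precisely this quantitative translation of Corollary~\ref{cor:configuration} into a density estimate sharp enough to recover the factor $13$: Lemma~\ref{lem:adjaDP} alone yields only the weaker coefficient $7$ obtained in Lemma~\ref{lem:rigidity}, where the concentration of the whole pencil at $C$ did the work for free, and that missing concentration here must be compensated precisely by the $TITIT$-obstruction.
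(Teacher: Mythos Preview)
Your proposal is not a proof but a plan with an explicitly acknowledged gap: you never carry out the ``quantitative translation'' of the $TITIT$ obstruction into a concrete inequality, and it is far from clear that the system you have written down (the identity $3\tau+2\delta+\iota=2m$, the bound $2\tau+\delta\le m$, something of the shape $\iota\le\tau+\delta+O(k)$, and an unspecified relation between $\tau$ and $\delta$) actually closes with the constant~$13$. In fact, if one formalises the $TITIT$ ban as ``no three consecutive vertex-intersections are all triple'', one gets at best $\tau\le 2\delta+O(k)$, and combining this with your other relations does not bound $m$ in terms of $k$ at all. Also, your marked set $M_i$ only records intersections with lines of $(\mathcal{A}\setminus\mathcal{A}_r)\setminus\{\Lambda\}$; you omit intersections with lines of $\mathcal{A}_r\setminus\mathcal{A}$, so inside an arc some of your ``$I$-intersections'' are not vertices of $\mathcal{A}$ at all, which further muddies the alternation argument.

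The paper avoids the whole difficulty by a geometric observation you missed: the $2m$ double points of $\mathcal{A}_r$ are exactly the midpoints of the sides of $\mathcal{P}$, hence all lie on the inscribed circle, and therefore $\Lambda$ (not the line at infinity) contains at most two of them. The paper then takes the marked set $M_1,\dots,M_t$ to consist of the intersections of $\Lambda$ with lines of $\mathcal{A}\,\Delta\,\mathcal{A}_r$ \emph{together with} these at most two $\mathcal{A}_r$-double points, so $t\le (k-1)+2=k+1$. With this choice, inside each open arc $]M_i,M_{i+1}[$ every crossed line lies in $\mathcal{A}\cap\mathcal{A}_r$ and every $\mathcal{A}_r$-vertex met is a triple point; hence the $\mathcal{A}$-vertices on the arc alternate strictly between quadruple points and double points. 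Corollary~\ref{cor:configuration} then caps each arc at $9$ lines of $\mathcal{A}$, and a direct count gives $n-1\le 4t+9t=13t\le 13(k+1)$, the desired contradiction. In short, your unknown $\delta$ is bounded by $2$ from the outset, and no further density estimate is needed.
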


\begin{proof}
Consider a line $\Lambda \in \mathcal{A} \setminus \mathcal{A}_{r}$. Conditions of Lemma~\ref{lem:rigidity1} are more restrictive than that of Lemma~\ref{lem:rigidity} so we can assume that $\Lambda$ does not contain the center $C$. We also assume by contradiction that $\Lambda$ is not the line at infinity for an Euclidean structure where regular polygon $\mathcal{P}$ is inscribed in a circle.
\par
Let $M_{1},\dots,M_{t}$ be the cyclical ordered subset of $\Lambda$ consisting of:
\begin{itemize}
    \item[a)] the intersection points of $\Lambda$ with the lines of the symmetric difference $\mathcal{A} \Delta \mathcal{A}_{r}$;
    \item[b)] the double points of arrangement $\mathcal{A}_{r}$ possibly contained in $\Lambda$.
\end{itemize}
Let's show first that $t \leq k+1$.  Indeed, line $\Lambda$ has at most $k-1$ intersection points with the lines of $\mathcal{A} \Delta \mathcal{A}_{r}$, so there are at most $k-1$ points of type a) among $M_i$'s. And at most $2$ of $M_i$'s are of type b), since  $\mathcal A_r$ has exactly $2m$ double points, namely the midpoints of edges of the convex polygon $\mathcal{P}$.

Next, we claim that there are at most $4t$ lines of ${\mathcal A}\setminus \Lambda$ that are incident to $M_1,\ldots, M_t$. Indeed, since $C\notin \Lambda$, each $M_i$ is at most a triple point of $\mathcal A_r$,  and $|\mathcal{A} \Delta \mathcal{A}_{r}\setminus \Lambda|=k-1$ . Taking the sum of contributions, and using   $t \leq k+1$ we get $3t+k-1\le 4t$.
 
\par
Our goal will now be to prove that each open interval $]M_{i},M_{i+1}[$ of $\Lambda$ intersects at most $9$ lines of $\mathcal{A}\setminus \Lambda$. If the set $\lbrace{ M_{1},\dots,M_{t} \rbrace}$ is empty, we will prove similarly that the whole projective line intersects at most $9$ lines of $\mathcal{A}\setminus \Lambda$. Then, counting the lines of $\mathcal{A}\setminus\Lambda$ intersected by $\Lambda$ inside and outside of $M_{1},\dots,M_{t}$, we obtain that $n-1 \leq 4t+9t \leq 13k+13$ (since $t \leq k+1$). This contradicts the inequality $k < \frac{1}{13}n - \frac{14}{13}$ and $\Lambda$ is therefore the line at infinity.\newline

It remains to prove that in each open interval $]M_{i},M_{i+1}[$ (or the whole line if $\lbrace{ M_{1},\dots,M_{t} \rbrace}$ is empty), $\Lambda$ intersects at most $9$ lines of $\mathcal{A} \setminus \lbrace{ \Lambda \rbrace}$. We assume by contradiction that there is such an open interval (that can be the whole line) $]M_{i},M_{i+1}[$ in $\Lambda$ that intersects at least $10$ lines of $\mathcal{A}$. Since the intersections of $\Lambda$ with the lines of $\mathcal{A} \Delta \mathcal{A}_{r}$ are contained in $M_{1},\dots,M_{n}$, all these lines of $\mathcal{A}$ that $\Lambda$ intersects in $]M_{i},M_{i+1}[$ actually are lines of  $\mathcal{A}_{r}$. It also follows from the definition of $M_{1},\dots,M_{t}$ that every line of $\mathcal{A}_{r}$ intersecting $\Lambda$ outside of $M_{1},\dots,M_{t}$ actually belongs to $\mathcal{A}$.
\par
The only vertices of $\mathcal{A}_{r}$ that line $\Lambda$ can meet outside of $M_{1},\dots,M_{t}$ are triple points of $\mathcal{A}_{r}$. Therefore, the vertices of $\mathcal{A}$ contained in the open segment $]M_{i},M_{i+1}[$ are either quadruple points of $\mathcal{A}$ (if they are vertices of $\mathcal{A}_{r}$) or double points of $\mathcal{A}$ (otherwise).
\par
Arrangement $\mathcal{A}$ does not belong to family $\mathcal{R}(0)$ (see the last claim in Lemma~\ref{lem:rigidity}). Hence, there cannot be two consecutive double points in $]M_{i},M_{i+1}[$ (see Lemma~\ref{lem:adjaDP}). We prove now that there cannot be two consecutive quadruple points $A,B$ of $\mathcal{A}$ in $]M_{i},M_{i+1}[$. Indeed, such segment $[A,B] \subset ]M_{i},M_{i+1}[$ is drawn between two vertices of $\mathcal{A}_{r}$ so unless it is contained in a line of $\mathcal{A}_{r}$ (and $\Lambda \notin \mathcal{A}_{r}$), $S$ intersects the interior of an edge of $\mathcal{A}_{r}$. This is a contradiction with the statement that $A,B$ are consecutive intersection points with $\mathcal{A}_{r}$ in $]M_{i},M_{i+1}[$. So vertices of segment $]M_{i},M_{i+1}[$ alternate between double and quadruple points. Since $]M_{i},M_{i+1}[$ intersects at least $10$ lines of $\mathcal{A}_{r}$, we conclude that the open segment $]M_{i},M_{i+1}[$ contains a sequence of five consecutive vertices $N_{1},N_{2},N_{3},N_{4},N_{5}$ where $N_{1},N_{3},N_{5}$ are quadruple points while $N_{2},N_{4}$ are double points.
\par
Since $N_{1},N_{3},N_{5}$ are triple points in $\mathcal{A}_{r}$ while $N_{2},N_{4}$ are interior points of edges in $\mathcal{A}_{r}$, Proposition~\ref{prop:configuration} proves that there is no line containing this configuration of five points. This final contradiction ends the proof.
\end{proof}

We will prove now that no simplicial arrangement can be obtained by removing a small proportion of lines from a regular simplicial arrangement (or the arrangement obtained by adding to it the line at infinity), finishing thus the proof of Proposition~\ref{prop:rigidity}.

\begin{proof}[Proof of Proposition~\ref{prop:rigidity}]
Drawing on the conclusions of Lemma~\ref{lem:rigidity1}, $\mathcal{A}$ is obtained by removing some lines from an arrangement $\mathcal{A}_{max}$ which is either $\mathcal{A}_{r}$ or the arrangement formed by $\mathcal{A}_{r}$ and the line at infinity.
\par
Assume first by contradiction that there is a line $\Lambda\subset \mathcal{A}_{\max} \setminus \mathcal{A}$  incident to the centre $C$ of the polygon $\mathcal{P}$. 
Decompose $\mathcal{A}_{r}$ into the pencil $\mathcal{A}_{C}$ of $m$ lines incident to $C$ and the arrangement $\mathcal{A}_{\gamma}$ of $m$ lines forming the sides of $\mathcal{P}$. Observe that the number of quadrilaterals of $\mathcal{A}_{\gamma}$ with one diagonal  contained in $\Lambda$ (see Lemma~\ref{lem:centralpolygon}) is given by:
\begin{itemize}
    \item[(i)] $\frac{m-2}{2}$ if $m$ is even and $\Lambda$ contains two opposite corners of $\mathcal{P}$;
    \item[(ii)] $\frac{m-4}{2}$ if $m$ is even and $\Lambda$ crosses the interior of two triangles of $\mathcal{A}_{\gamma}$;
    \item[(iii)] $\frac{m-3}{2}$ if $m$ is odd.
\end{itemize}
Furthermore, in case (i) the interior of one of quadrilaterals is crossed by the line at infinity, in case (ii) the interior of none of quadrilaterals is crossed by the line at infinity, and in case (iii) the interior of one of quadrilaterals is crossed by the line at infinity.

We conclude that among  quadrilaterals crossed by $\Lambda$, at least $\frac{m-5}{2}$  have the interior disjoint from the line at infinity. Since $\mathcal{A}$ is simplicial, each of these quadrilaterals is contained in a triangle of $\mathcal{A}$, so at least one of their sides should also be removed from $\mathcal{A}_{max}$. Since the same line can contain a side of at most two of these quadrilaterals, the total number of lines of $\mathcal{A}_{\max} \setminus \mathcal{A}$ is at least $\frac{m-5}{4}$. We obtain $k \geq \frac{m-5}{4}$. Since we have $n \leq 2m+1$, we get $k \geq \frac{n}{8}-\frac{11}{8}$. Since $n \geq 15$, this contradicts $k < \frac{1}{13}n - \frac{14}{13}$.
\par
We conclude that the the center of polygon $\mathcal{P}$ is incident to exactly $m$ lines of $\mathcal{A}$. It follows from Proposition~\ref{prop:star1} that the star $S(C)$ of $C$ in the simplicial arrangement $\mathcal{A}$ is cut out by at least $m$ lines (Lemma~\ref{lem:rigidity} proves that $\mathcal{A}$ cannot belong to family $\mathcal{R}(0)$). We deduce that $\mathcal{A}$ coincides with $\mathcal{A}_{max}$.
\end{proof}

\subsection{Proof of the main theorem}\label{sub:PROOF}

Combining the results from Sections~\ref{sub:star},~\ref{sec:estimates} and~\ref{sec:rigidity}, we finally prove the asymptotic classification of simplicial arrangements under the additional hypothesis of a linear bound on the number of double points.

\begin{proof}[Proof of Theorem~\ref{thm:MAIN}]
Assuming that a simplicial arrangement $\mathcal{A}$ of $n$ lines contains at most $Kn$ double points (for some $K>1$) and that $n \geq  exp~exp (cK^{c})$, Theorem~\ref{thm:GT} proves that $\mathcal{A}$ differs (up to a projective transformation) by at most $O(K)$ lines from one of the three models described in the statement of the theorem.
\par
In the first case, $\mathcal{A}$ differs by a small proportion of lines from a pencil. In particular, provided constant $c$ is large enough, $\mathcal{A}$ contains a vertex $C$ of order $k_{C}$ arbitrarily close to $n$. Proposition~\ref{prop:star1} proves that if $\mathcal{A}$ is not a simplicial arrangement of family $\mathcal{R}(0)$, the star $S(C)$ of $C$ is cut out by at least $k_{C}$ lines that do not contain $C$ and $k_{C} \leq \frac{n}{2}$. It follows that $\mathcal{A}$ belongs to family $\mathcal{R}(0)$.
\par
In the second case, we deduce from Proposition~\ref{prop:rigidity} that $\mathcal{A}$ belongs to family $\mathcal{R}(1)$ or family $\mathcal{R}(2)$.
\par
In the third case, an arbitrarily large proportion of lines of $\mathcal{A}$ is tangent to the projective dual curve of an irreducible cubic. Proposition~\ref{prop:irreducible} proves that this is impossible (provided $c$ is large enough).
\end{proof}

\appendix

\section{Projective rigidity of regular simplicial arrangements}

As we use Green-Tao Theorem~\ref{thm:GT}, the asymptotic classification we obtain is not merely combinatorial but works at the level of projective equivalence classes. However, the fact that the combinatorial and projective equivalence relations coincide for the families $\mathcal{R}(1)$ and $\mathcal{R}(2)$ is valid in general and not just asymptotically. It follows from a result of elementary projective geometry.

\begin{prop}\label{prop:chareg}
Let $\mathcal{P} \subset \mathbb{R}^{2} \subset \mathbb{RP}^{2}$ be a convex $n$-gon with $n\geq 5$ vertices $x_{1},\ldots,x_{n}$. Suppose there is a point $C$ in the interior of $\mathcal{P}$ such that for any $i$ the following conditions hold:
\begin{itemize}
    \item Points $C$, $x_i$, and $x_{i-2}x_{i-1}\cap x_{i+1}x_{i+2}$ are aligned;
    \item Points $C$, $x_{i-2}x_{i-1}\cap x_{i}x_{i+1}$, and  $x_{i-3}x_{i-2}\cap x_{i+1}x_{i+2}$ are aligned.
\end{itemize}
Then there exists a projective transformation of $\mathbb{RP}^{2}$ that sends $\mathcal{P}$ to the regular $n$-gon and $C$ to its centre.
\end{prop}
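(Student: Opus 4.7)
The hypotheses are projectively invariant, so the strategy is to first normalize via a projective transformation and then exploit the content of conditions (a) and (b) as projective reflectional symmetries.

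\emph{Normalization.} The four points $C, x_{n-1}, x_n, x_1$ are in general position (since $\mathcal{P}$ is convex with $n\geq 5$ vertices and $C$ lies in its interior, relabelling the chosen triple of consecutive vertices if necessary so that $C$ avoids the relevant diagonal), so there exists a unique projective transformation of $\mathbb{RP}^2$ sending them to the centre and three consecutive vertices of a chosen regular $n$-gon $\mathcal{R}$ with vertices $y_1, \ldots, y_n$. After applying it, $C$ and $x_{n-1}, x_n, x_1$ coincide with their counterparts in $\mathcal{R}$, and the task reduces to showing $x_k = y_k$ for $k = 2, 3, \ldots, n-2$.

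\emph{Reflections in the regular polygon.} In the regular $n$-gon, the Euclidean reflection $\sigma_i$ through the axis $Cx_i$ is a projective involution (a harmonic homology of $\mathbb{RP}^2$ with axis $Cx_i$ and centre at the point at infinity perpendicular to $Cx_i$) exchanging $x_{i-k} \leftrightarrow x_{i+k}$ for each $k$. Under this symmetry the lines $x_{i-2}x_{i-1}$ and $x_{i+1}x_{i+2}$ are swapped, so the intersection $M_i := x_{i-2}x_{i-1}\cap x_{i+1}x_{i+2}$ is fixed, hence lies on the axis $Cx_i$; this is exactly condition (a) for $i$. Condition (b) for $i$ admits an analogous interpretation through the reflection whose axis joins $C$ to the midpoint of the edge $[x_{i-1}, x_i]$. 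The plan is to run these implications in reverse and recover the projective reflections from the hypotheses.

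\emph{Extracting the projective involutions.} For each $i$, let $\sigma_i^V$ be the unique harmonic homology of $\mathbb{RP}^2$ with axis $Cx_i$ whose centre $O_i$ lies on the line $x_{i-1}x_{i+1}$ and that satisfies $\sigma_i^V(x_{i-1}) = x_{i+1}$; explicitly, $O_i$ is the harmonic conjugate of $x_{i-1}x_{i+1} \cap Cx_i$ with respect to $x_{i-1}, x_{i+1}$. Condition (a) for $i$ yields $M_i \in Cx_i$, so $M_i$ is a fixed point of $\sigma_i^V$ and the image under $\sigma_i^V$ of the line $x_{i-2}x_{i-1}$ (which passes through $x_{i-1}$ and $M_i$) is the line through $x_{i+1}$ and $M_i$, namely $x_{i+1}x_{i+2}$. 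Combining this with condition (b) at an adjacent index and with the harmonic properties of the diagonal triangle of the complete quadrilateral $(x_{i-2}, x_{i-1}, x_{i+1}, x_{i+2})$, one identifies $O_i$ with $x_{i-1}x_{i+1}\cap x_{i-2}x_{i+2}$ and deduces $\sigma_i^V(x_{i-2}) = x_{i+2}$. Inducting on $k$ (using conditions (a) and (b) at neighbouring indices) extends this to $\sigma_i^V(x_{i-k}) = x_{i+k}$ for all $k$. A parallel construction yields involutions $\sigma_i^E$ encoding the edge-axis reflections from condition (b).

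\emph{Conclusion and main obstacle.} The $2n$ projective involutions $\sigma_i^V, \sigma_i^E$ fix $C$ and act on $\{x_1, \ldots, x_n\}$ as the dihedral group $D_n$. A faithful projective action of $D_n$ on $\mathbb{RP}^2$ with a fixed interior point of a convex polygon is projectively conjugate to the standard Euclidean dihedral action at the centre of a regular $n$-gon; combined with the initial normalization this forces $\mathcal{P} = \mathcal{R}$, as desired. The main obstacle is the inductive bootstrap in the third paragraph: showing that $O_i = x_{i-1}x_{i+1}\cap x_{i-2}x_{i+2}$ and that the extension of $\sigma_i^V$ to all vertex indices is globally consistent. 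This is a delicate propagation that requires simultaneously invoking conditions (a) and (b) with several indices and carefully tracking cross-ratios through successive harmonic conjugations.
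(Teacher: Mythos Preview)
Your approach via projective reflections is genuinely different from the paper's, and the gap you flag is real and, as stated, unresolved.

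The paper does not work with harmonic homologies at all. Instead it \emph{dualizes}: the point $C$ becomes a line $\widehat C$, the vertices $x_i$ become lines $\hat x_i$, and one passes to the affine chart in which $\widehat C$ is the line at infinity. Setting $y_i=\hat x_i\cap\hat x_{i+1}$, conditions (a) and (b) translate into the purely affine statements that for every $i$ the side $y_iy_{i+1}$ is parallel to the diagonal $y_{i-1}y_{i+2}$, and the diagonal $y_{i-1}y_{i+1}$ is parallel to $y_{i-2}y_{i+2}$. One then takes the unique conic $\widehat Q$ through $y_1,\ldots,y_5$; a short case analysis (ellipse, hyperbola, parabola) using the transitive affine symmetry group of a conic shows that all $y_i$ lie on $\widehat Q$ at equal parameter increments, which forces the ellipse case and affine regularity of $\widehat{\mathcal P}$. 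This bypasses any inductive cross-ratio bookkeeping.

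In your scheme the crucial unproven step is the identification $O_i=x_{i-1}x_{i+1}\cap x_{i-2}x_{i+2}$. Unwinding the harmonic property of the diagonal triangle of the complete quadrangle on $x_{i-2},x_{i-1},x_{i+1},x_{i+2}$, this identification is equivalent to the third diagonal point $Q=x_{i-2}x_{i+1}\cap x_{i-1}x_{i+2}$ lying on the line $Cx_i$. That collinearity is \emph{not} among hypotheses (a) or (b), nor does it follow from them at a single index; you would have to manufacture it from several instances of (a) and (b) simultaneously, and you have not indicated how. Without it, $\sigma_i^V$ need not send $x_{i-2}$ to $x_{i+2}$, the centres $O_i$ need not be consistent, and the dihedral action does not assemble. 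The paper's dualization avoids exactly this difficulty: the parallelism conditions propagate along the conic one vertex at a time with no hidden compatibility to verify.
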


\begin{proof}
We consider the dual configuration in the dual projective plane. Dual polygon $\widehat{\mathcal{P}}$ is a convex polygon cut out by dual lines $\hat{x}_{1},\ldots, \hat{x}_{n}$ while $\widehat{C}$ is the dual line of point $C$. Besides, all the points of line $\widehat{C}$ are disjoint from $\widehat{\mathcal{P}}$.
\par
We consider an affine chart of the dual projective plane where $\widehat{C}$ is the line at infinity. Denoting by $y_{1},\ldots, y_{n}$ of $\widehat{\mathcal{P}}$ where $y_{i}=\hat{x}_{i} \cap \hat{x}_{i+1}$, the hypotheses on polygon $\mathcal{P}$ correspond to the following hypotheses on the dual polygon $\widehat{\mathcal{P}}$:
\begin{itemize}
    \item for any $i$, the side $y_iy_{i+1}$ is parallel to the diagonal $y_{i-1}y_{i+2}$;
    \item the diagonal $y_{i-1}y_{i+1}$ is parallel to $y_{i-2}y_{i+2}$.
\end{itemize}
We will prove that $\widehat P$ is affinely equivalent to a regular $n$-gon, proving in consequence that configuration $(\mathcal{P},C)$ is projectively equivalent to a configuration formed by a regular $n$-gon and its centre.
\par
Consider the unique conic $\widehat{Q}$ containing points $y_1,\ldots, y_5$. Assume first that this conic is an ellipse. Up to an affine transformation, we can assume that $\widehat{Q}$ is a circle. Now, since $y_1y_5$ and $y_2y_4$ are parallel, we have $|y_1y_2|=|y_4y_5|$. Since $y_2y_5$ and $y_3y_4$ are parallel, we have $|y_2y_3|=|y_4y_5|$, and  since $y_1y_4$ and $y_2y_3$ are parallel, we have $|y_1y_2|=|y_3y_4|$. We conclude that the four sides $y_1y_2,\ldots, y_4y_5$ have equal length. Now, point $y_6$ is the intersection of diagonals $y_6y_2$ and $y_6y_3$ which are parallel to $y_5y_3$ and $y_5y_4$ correspondingly. It follows that $y_6$ lies on $\widehat{Q}$ and $|y_5y_6|=|y_5y_4|$. Repeating this argument we prove that vertices $y_1,\ldots,y_{n}$ lie on $\widehat{Q}$ and the polygon is regular.
\par
Let us now prove that the conic $Q$ cannot be neither a parabola, nor a hyperbola. Assume by contradiction that $\widehat{Q}$ is a hyperbola. Since $\widehat P$ is convex and has at least five vertices, it lies on one of the branches of $\widehat{Q}$, which we denote $\widehat{Q}_{+}$. Now, there is an action of the additive group $\mathbb{R}$ on the affine plane $\mathbb{R}^{2}$ by affine transformations, that  preserves $Q$ and is transitive on $\widehat{Q}_{+}$. Denote by $dt$ an $\mathbb R$-invariant $1$-form on $\widehat{Q}_{+}$ (unique up to scale) and let $t$ be the corresponding parametrization of $\widehat{Q}_{+}$. Note that for any four points $x_1,x_2,x_3,x_4$ on $\widehat{Q}_{+}$ such that $x_1x_4$ is parallel to $x_2x_3$ we have $t(x_2)-t(x_1)=t(x_4)-t(x_3)$. Then, applying the same reasoning as in the case when $\widehat{Q}$ is an ellipse, we get $t(y_2)-t(y_1)=\ldots=t(y_5)-t(y_4)$. In other words, there is an element $\varphi\in \mathbb R$ such that $\varphi(y_i)=y_{i+1}$ for $1 \leq i \leq 4$. Repeating the argument from the case where $\widehat{Q}$ is an ellipse we see that $y_6\in \widehat{Q}_{+}$ and so on until $y_n$. However, now we get a contradiction, since $y_{n+1}=y_1$ should satisfy $y_{n+1}=\varphi(y_n)$ by the same reasoning, which is clearly not the case.
\par
The case where $\widehat{Q}$ is a parabola is treated in the same way and the degenerate case where $\widehat{Q}$ is a pair of lines cannot happen since $y_{1},\dots,y_{n}$ form the sides of a nondegenerate convex polygon. Hence, $\widehat{Q}$ is an ellipse and the claim follows.
\end{proof}

Projective rigidity of families $\mathcal{R}(1)$ and family $\mathcal{R}(2)$ has already been proved in Theorem~3.6 and Corollary~3.7 of \cite{Cu}. Here, we prove it as a consequence of Proposition~\ref{prop:chareg}.

\begin{cor}\label{cor:ProRig}
If two simplicial arrangements belong to the same combinatorial equivalence class belonging to family $\mathcal{R}(1)$ or family $\mathcal{R}(2)$, then they are conjugated by a projective transformation of $\mathbb{RP}^{2}$.
\end{cor}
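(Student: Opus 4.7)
The plan is to apply Proposition~\ref{prop:chareg} to the central polygon $\mathcal{P}$ cut out by the non-axis lines of the arrangement, paired with the common point $C$ of the pencil of axes.

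First, given an arrangement $\mathcal{A}$ in a combinatorial class from $\mathcal{R}(1)$ or $\mathcal{R}(2)$, I extract the relevant combinatorial data. The center $C$ is characterized as the unique vertex of maximal order $k$; the $k$ non-axis lines, with the line at infinity excluded in the case of $\mathcal{R}(2)$ (which is the unique line of $\mathcal{A}$ that is neither concurrent at $C$ nor a side of the polygon), cut out the central convex $k$-gon $\mathcal{P}$ whose vertices I label $x_1, \ldots, x_k$ cyclically. I then choose an affine chart in which $\mathcal{P} \subset \mathbb{R}^2 \subset \mathbb{RP}^2$.

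Second, I verify that the two alignment hypotheses of Proposition~\ref{prop:chareg} hold for $(\mathcal{P}, C)$. Each one asserts that a specific triple of arrangement vertices is concurrent on a line of $\mathcal{A}$, and is therefore preserved under combinatorial equivalence; it is enough to check the conditions in the standard regular model. Condition (i) follows from the reflection across the symmetry axis through $x_i$, which swaps $x_{i-j}$ with $x_{i+j}$ and thus fixes $x_{i-2}x_{i-1}\cap x_{i+1}x_{i+2}$. Condition (ii) follows from the reflection across the perpendicular bisector of the edge $x_{i-1}x_i$, which swaps $x_{i-j}$ with $x_{i-1+j}$ and fixes both intersection points appearing in that condition.

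Third, assuming $k \geq 5$, Proposition~\ref{prop:chareg} provides a projective transformation $\phi$ sending $\mathcal{P}$ to a regular $k$-gon and $C$ to its center. I then argue that $\phi$ sends the remaining lines of $\mathcal{A}$ correctly. The $k$ sides of $\mathcal{P}$ go to the sides of the regular polygon by construction. For the $k$ axes through $C$: condition (i) forces the axes through vertices of $\mathcal{P}$ to map to the vertex symmetry axes (or diagonals when $k$ is even), while condition (ii) forces the axes crossing opposite edges at interior double points to map to the perpendicular bisectors of those edges, accounting for all $k$ axes. For $\mathcal{R}(2)$, the image of the extra line is constrained combinatorially to pass through the $k/2$ intersection points of pairs of parallel opposite sides of the regular polygon, all of which are at infinity; since $k/2 \geq 2$, these points pin down the line at infinity uniquely.

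The main obstacle I anticipate is the matching of axes in the even-$k$ case, where the pencil through $C$ splits into $k/2$ vertex axes and $k/2$ edge axes, and one must use condition (ii) sharply to prevent any residual freedom in the placement of the edge axes relative to perpendicular bisectors. The small cases $k \in \{3, 4\}$ of $\mathcal{R}(1)$, where Proposition~\ref{prop:chareg} does not directly apply, need a separate, direct treatment; they are essentially immediate because four or five points in general position, subject to the concurrence conditions imposed by the combinatorial class, admit a unique projective normalization.
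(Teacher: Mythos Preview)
Your approach is essentially the same as the paper's. The paper applies Proposition~\ref{prop:chareg} to ``the star of the central vertex'', which as a convex region coincides exactly with your central polygon $\mathcal{P}$; both arguments then handle the small cases $k\in\{3,4\}$ separately, and the extension from $\mathcal{R}(1)$ to $\mathcal{R}(2)$ by noting that the extra line is forced. Your write-up is more explicit than the paper's in two places---verifying the alignment hypotheses of Proposition~\ref{prop:chareg} via the dihedral symmetries, and checking that the projective map produced by the proposition carries the $k$ axes (and, for $\mathcal{R}(2)$, the line at infinity) to the correct lines---but these are exactly the steps implicit in the paper's one-line reduction.
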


\begin{proof}
For arrangements $\mathcal{A}(2k,1)$ with $k \geq 5$, we just have to apply Proposition~\ref{prop:chareg} to the star of the central vertex of the arrangement. Arrangements of combinatorial class $\mathcal{A}(6,1)$ are formed by the six lines containing two points in a configuration of four points $A,B,C,O$ such that $A,B,C$ form a triangle and $O$ belongs to the interior of the triangle. Up to projective transformation, there is only one such configuration so the arrangements of class $\mathcal{A}(6,1)$ are projectively rigid.
\par
Similarly, arrangements of $\mathcal{A}(8,1)$ are combinatorially equivalent to an arrangement formed by the sides of a square $ABCD$, their diagonals $AC,BD$ intersecting in $O$ and the two lines $OE$ and $OF$ where $E=AB \cap CD$ and $F=BC \cap AD$. If we forget the two lines $OE$ and $OF$, we obtain an arrangement of $\mathcal{A}(6,1)$ so any arrangement of $\mathcal{A}(8,1)$ is obtained from a projectively rigid configuration of four points exactly like the arrangements of  class $\mathcal{A}(6,1)$.
\par
Any arrangement of family $\mathcal{R}(2)$ is obtained from an arrangement of family $\mathcal{R}(1)$ without any choice by adding the line at infinity (the line containing the intersection points of the parallel sides of a regular polygon). Therefore, arrangements of families $\mathcal{R}(1)$ and $\mathcal{R}(2)$ are projectively rigid.
\end{proof}

\begin{rem}
In contrast with families $\mathcal{R}(1)$ and $\mathcal{R}(2)$, the moduli space of arrangements of $\mathcal{R}(0)$ formed by $n$ lines is of real dimension $n-3$. In Conjecture 2.17 of \cite{Gr0}, Gr\"{u}nbaum
asserts that all simplicial arrangement of $\mathbb{RP}^{2}$ that do not belong to $\mathcal{R}(0)$ are projectively rigid.
\end{rem}

\nopagebreak
\vskip.5cm
\end{document}